\def\dim{\operatorname{dim}}
\def\Im{\operatorname{Im}}
\newcommand{\C}{\mathbb{C}}
\newcommand{\PP}{\mathbb{P}}
\newcommand{\A}{\mathcal{A}}
\newcommand{\Z}{\mathbb{Z}}
\newtheorem{teo}{Theorem}[section]
\newtheorem{propo}[teo]{Proposition}
\theoremstyle{definition}
\newtheorem{defi}[teo]{Definition}
\newtheorem{conj}[teo]{Conjecture}
\theoremstyle{definition}
\theoremstyle{definition}
\newtheorem{re}[teo]{Remark}
\begin{document}

\title[Image Milnor number formulas]{Image Milnor number formulas for weighted-homogeneous map-germs}
\author{Irma Pallar\'es Torres and Guillermo Pe\~nafort Sanchis }

\address{Basque Center for Applied Mathematics,
Alameda de Mazarredo 14, 48009 Bilbao, Bizkaia, Spain.}

\email{ipallares@bcamath.org}

\email{guillermo.penafort@uv.es}

\date{}

\thanks{The authors were partially supported by the ERCEA 615655 NMST 
Consolidator Grant and 
by the Basque Government through the BERC 2014-2017 program and by Spanish 
Ministry of Economy and
Competitiveness MINECO: BCAM Severo Ochoa excellence accreditation 
SEV-2013-0323.}

\subjclass[2010]{Primary 58K60; Secondary  58K40, 14C17} 
\keywords{Singularities of mappings, vanishing homology, weighted homogeneous singularities, characteristic classes, Thom polynomials}

\begin{abstract}
We give formulas for the image Milnor number of a weighted-homogeneous map-germ $(\C^n,0)\to(\C^{n+1},0)$, for $n=4$ and $5$, in terms of weights and degrees. Our expressions are obtained by a purely interpolative method, applied to a result by Ohmoto. We use our approach to recover the formulas for $n=2$ and $3$ due to Mond and Ohmoto, respectively. For $n\geq 6$, the method is valid as long as certain multi-singularity conjecture holds. 
\end{abstract}

\maketitle
\section{Introduction}
The study of $\mathcal A$-finite singular map-germs $F\colon (\C^n,0)\to(\C^{n+1},0)$ is rich in invariants defined by passing to a stable object. This theory deals with $\mathcal A$-classes, that is, map-germs up to coordinate changes in source and target. A germ is \emph{stable} if its $\mathcal A$-class does not change after a small perturbation, and it is \emph{$\mathcal A$-finite} if  stability fails at most on an isolated point. Some common invariants associated to $\mathcal A$-finite germs are the \emph{$0$-stable invariants}, the \emph{$\mathcal A_e$-codimension} and the \emph{image Milnor number} $\mu_I$. The $0$-stable invariants count the number of appearances of particular stable singularity types, the $\mathcal A_e$-codimension measures the number of parameters of a mini-versal unfolding and  $\mu_I$ counts the rank in the middle dimension of the homology of the image of a stable perturbation. 

All these invariants are hard to compute directly from the definition, but many of them can be computed as dimensions of suitable vector spaces. This accounts for many of the $0$-stable invariants and for the $\mathcal A_e$-codimension (see Section \ref{Afinite}), but not so far for $\mu_I$. This paper is devoted to establishing formulas of a different nature for $\mu_I$ in the case of weighted-homogeneous map-germs, extending results of Ohmoto and Mond. Apart from being interesting on their own, the formulas for $\mu_I$ bring us closer to the proof of Mond's conjecture, (see Conjecture \ref{MondConjecture}), which claims that $\mathcal A_e\textrm{-codim}\leq \mu_I$, with equality in the case of weighted-homogeneous map-germs. Thanks to results from \cite[Section 7]{Fernandez-de-Bobadilla:2016}, it suffices to check the statement of the conjecture for a family of finitely determined map-germs with unbound multiplicity. Given such a family, one can compute its $\mathcal A_e$-codimension via the already known formula, the only part missing is the $\mu_I$ computation. It is worth mentioning that the possibility of finding a formula that computes $\mu_I$ as the dimension of a vector space prior to proving Mond's conjecture is unlikely. This is because there is already a candidate for such a formula found in \cite{Fernandez-de-Bobadilla:2016} but, as explained there, proving that it actually computes $\mu_I$ is equivalent to proving Mond's conjecture.  The formulas found here are based on an  alternative, more topological approach that goes back to Thom \cite{Thom:1956} and connects the geometry of singular maps to certain characteristic classes. 

T. Ohmoto \cite{Ohmoto:2014} has adapted these techniques to show the existence of formulas computing the $0$-stable invariants and the image Milnor number, for weighted-homogenenous map-germs,  for $n\leq 5$, in terms of weights and degrees. The formulas are conjectured to exist for arbitrary $n$, see \cite{Kazarian2003Multising,Kazarian:2006,Ohmoto:2014}. While
the expressions for $0$-stable invariants follow easily from their \emph{Thom polynomials}, the image Milnor number formulas are harder to obtain.

The $\mu_I$ formulas predicted by Ohmoto \cite{Ohmoto:2014} have a specific form; they are rational functions with known denominator, whose numerator is obtained from the $n$-th degree truncation of the Segre-MacPherson Thom polynomial $tp^{\text{SM}}(\alpha_{\text{image}})$ series. This series is an extension of the classical Thom polynomial  $tp(\eta)$ of a stable singularity type $\eta$.  R. Rim\'any's \emph{restriction method} \cite{Rimanyi:2001}  allows to determine $tp(\eta)$  from calculations involving stable types of different dimensions. The method was adapted by Ohmoto \cite{Ohmoto:2014} to determine $tp^{\text{SM}}(\alpha_{\text{image}})$ up to degree three, giving $\mu_I$ formulas for $n=2$ and $3$ (the formula for $n=2$ was obtained previously by Mond  by different methods \cite{Mond:1991quasihomogeneous}).

Recent developments have made possible to compute truncations of the series $tp^{\text{SM}}(\alpha_{\text{image}})$ in a different manner: the series $tp^{\text{SM}}(\alpha_{\text{image}})$ has coefficients $b_\alpha\in \mathbb Q$ and variables $s_0$ and $c_i$. If $F\colon (\C^n,0)\to (\C^{n+1},0)$ is a weighted-homogeneous mapping with grading  $(w,d)$, then the image Milnor number $\mu_I(F)$ depends on the evaluation on certain functions $s_0(w,d),c_i(w,d)$ of the $n$-th truncation of $tp^{\text{SM}}(\alpha_{\text{image}})$.  Our goal is to find the $b_\alpha$ up to  degree $n$. For fixed $F$, there is a way to compute $\mu_I(F)$  with the software {\sc Singular}, based on results in  \cite{Fernandez-de-Bobadilla:2016} (see also \cite{Sharland2017ExamplesCorank3}). Having the value of $\mu_I(F)$ and the grading $(w,d)$ of  $F$ at hand, one can determine some relations between the coefficients  $b_\alpha$. Sampling enough map-germs $F$, one can determine the desired $b_\alpha$. Observe that this interpolative method does not involve the characteristic classes construction of Ohmoto's approach. Details are found in Section \ref{secOhmotoFormula}.
We use this method to recover the formulas known already and to derive new ones for all cases where the theory is stablished rigorously, that is, for $n=4$ and $5$.

The first steps of the process are easy, consisting only on sampling singularities found in the literature. Indeed, a surprisingly big portion of the interpolation can be completed just by sampling different gradings of stable germs. The challenge starts once the information coming from known singularities has been exhausted. On one hand, map-germs which are too simple do not yield new information about the $b_{\alpha}$ (for example, the case of $n=4$ requires sampling at least one map-germ with quintuple points, while the case of $n=5$ requires considering no less than three map-germs of corank two). On the other hand, degenerate candidates can be too complicated to compute their $\mu_I$, or to check  $\mathcal A$-finiteness. The difficulty of this work strives on navigating between these two extremes. In a series of remarks, we emphasize on the key strategies that have made our interpolative approach successful.

\section{Formulas for $\mu_I$ and $\# \eta$}\label{RESULTS}

In this section we list the formulas for the image Milnor number $\mu_I$ and $0$-stable invariants $\#\eta(F)$ of an $\mathcal{A}$-finite weighted-homogeneous map-germ \[F\colon(\C^n,0)\rightarrow (\C^{n+1},0),\]  for $n=4$ and $5$. The expressions depend on some coefficients $c_{k,n}$ and $s_{0,n}$ of Chern and Landweber-Novikov classes associated to $F$ (and not on the classes themselves). For the shake of completeness, these classes are introduced briefly in Section \ref{secOhmotoFormula}, the proofs being found in subsequent sections.

\subsection*{Image Milnor number for $\boldsymbol{n=4}$ and $\boldsymbol 5$}
Analogously to the classical hyper-surface setting of Milnor \cite{Milnor1968Singular-points}, the homotopy of the image of stable perturbation of a map-germ gives a well defined invariant. 
Let $F\colon(\C^n,0)\rightarrow (\C^{n+1},0)$ be an $\mathcal{A}$-finite map-germ and $f_y$ a stable perturbation of $F$. By \cite{Mond:1991}, the disentanglement $X_y$ of $F$ has the homotopy type of a wedge of $n$-spheres, and the number $\mu_I(F)$ of such spheres is the \emph{image Milnor number} of $F$. 

As mentioned in the Introduction, Ohmoto has shown that, for weighted-homogeneous map-germs, $\mu_I$ can be expressed in terms of the weights and degrees for $n\leq 5$. The restriction $n\leq 5$ comes from the fact that certain results are only known for Morin singularities. The expression for $n=2$ was obtained by Mond \cite{Mond:1991quasihomogeneous} with a different approach. Ohmoto in \cite{Ohmoto:2014} recovers the formula for $n=2$ and obtains the one for $n=3$. The following theorem, whose proof will be given in Section \ref{SecNew}, includes the two remaining cases.

\begin{teo} \label{FormulaNew}
Let $F\colon (\C^n,0)\to(\C^{n+1},0)$ be a weighted-homogeneous $\mathcal{A}$-finite map-germ with weights $w=(w_1,\ldots,w_n)$ and degrees $d=(d_0,\ldots, d_{n})$.
 If $n=4$,  then 
\begin{enumerate}
\item[]$\mu_I(F)=\dfrac{1}{\sigma_4}\big(\dfrac{1}{2!}(-s_0+c_1) \sigma_3+\dfrac{1}{3!}(s_0^2-c_1^2-c_2)\sigma_2\vspace{1.5mm}\\
\phantom{\mu_I(f)=}+\dfrac{1}{4!}(-s_0^3-2 s_0^2c_1+ s_0 c_1^2+16 s_0 c_2+2c_1^3-10c_1c_2)\sigma_1 \vspace{1.5mm} \\
\phantom{\mu_I(f)=}+\dfrac{1}{5!}(s_0^4+5s_0^3c_1+5s_0^2c_1^2-50s_0^2c_2-5s_0c_1^3-20s_0c_1c_2\vspace{1.5mm}\\
\phantom{\mu_I(f)=}+60s_0c_3-6c_1^4+34c_1^2c_2-64c_1c_3+108c_2^2+4c_4)\big).$
\end{enumerate}
If $n=5$, then
\begin{enumerate}
\item[]$\mu_I(F)=-\dfrac{1}{\sigma_5}\big(\dfrac{1}{2!}(-s_0+c_1) \sigma_4+\dfrac{1}{3!}(s_0^2-c_1^2-c_2)\sigma_3+\vspace{1.5mm} \\
\phantom{\mu_I(f)=}+\dfrac{1}{4!}(-s_0^3-2 s_0^2c_1+ s_0 c_1^2+16 s_0 c_2+2c_1^3-10c_1c_2)\sigma_2 \vspace{1.5mm}\\
\phantom{\mu_I(f)=}+\dfrac{1}{5!}(s_0^4+5s_0^3c_1+5s_0^2c_1^2-50s_0^2c_2-5s_0c_1^3-20s_0c_1c_2\vspace{1.5mm}\\
\phantom{\mu_I(f)=}+60s_0c_3-6c_1^4+34c_1^2c_2-64c_1c_3+108c_2^2+4c_4)\sigma_1\vspace{1.5mm}\\
\phantom{\mu_I(f)=}+\dfrac{1}{6!}(-s_0^5-9s_0^4c_1-25s_0^3c_1^2+110s_0^3c_2-15s_0^2c_1^3+270s_0^2c_1c_2\vspace{1.5mm}\\\phantom{\mu_I(f)=}-240s_0^2c_3+26s_0c_1^4+16s_0c_1^2c_2+24s_0c_1c_3-1138s_0c_2^2+336s_0c_4
\vspace{1.5mm}\\
\phantom{\mu_I(f)=}+24c_1^5-156c_1^3c_2+276c_1^2c_3+108c_1c_2^2-396c_1c_4+600c_2c_3)\big).$
\end{enumerate}
\end{teo}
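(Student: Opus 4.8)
The plan is to exploit the rigid shape of Ohmoto's formula recalled in Section \ref{secOhmotoFormula}: for a weighted-homogeneous $\mathcal{A}$-finite germ $F$ with grading $(w,d)$, the image Milnor number is a rational expression $\mu_I(F) = \pm \frac{1}{\sigma_n} P_n$, in which the denominator $\sigma_n$ is known and the numerator $P_n$ is the degree-$n$ truncation of the series $tp^{\text{SM}}(\alpha_{\text{image}})$, a polynomial in the variables $s_0$ and $c_i$ whose coefficients $b_\alpha \in \mathbb{Q}$ are universal, independent of $n$. Up to degree $n$ there are only finitely many admissible monomials $s_0^{a_0} c_1^{a_1} \cdots c_n^{a_n}$, hence only finitely many unknown $b_\alpha$, and the whole theorem reduces to pinning these down. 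Since the $b_\alpha$ are universal, the values already fixed in the established cases $n=2,3$ carry over, and the task for $n=4$ and $5$ is to determine only the genuinely new, higher-degree coefficients.

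The interpolation proceeds germ by germ. Each concrete weighted-homogeneous $\mathcal{A}$-finite $F$ specializes the variables through its grading to explicit rational numbers $s_0(w,d)$, $c_i(w,d)$ and $\sigma_n(w,d)$, while $\mu_I(F)$ is computed independently using the {\sc Singular}-based algorithm built on \cite{Fernandez-de-Bobadilla:2016}. Substituting both sides into Ohmoto's formula and clearing the known denominator yields one linear equation in the unknown $b_\alpha$. First I would collect such equations from the stable and simple germs found in the literature, which already determines most coefficients; then I would augment the sample with carefully chosen degenerate germs until the linear system attains full rank. Solving it and substituting the resulting $b_\alpha$ back into $\pm \frac{1}{\sigma_n} P_n$ produces the stated closed forms.

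The main obstacle is the design of the sampling set rather than any individual computation. The simplest germs fail to activate the top-degree monomials and so yield no new equations, while the germs that do activate them --- at least one with quintuple points for $n=4$, and no fewer than three of corank two for $n=5$ --- are exactly those for which both the $\mu_I$ computation in {\sc Singular} and the verification of $\mathcal{A}$-finiteness become delicate. The work thus lies in locating germs that are degenerate enough to be informative yet tame enough to be computable, and in certifying $\mathcal{A}$-finiteness for each. Once a full-rank system is solved, I would validate the resulting formulas by testing them against further germs not used in the interpolation and checking agreement with their independently computed $\mu_I$.
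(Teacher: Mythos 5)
Your proposal is correct and follows essentially the same route as the paper: interpolation of the universal coefficients $b_\alpha$ in Ohmoto's expression (Proposition \ref{thmFormulaOhmoto}), sampling stable germs first and then progressively more degenerate ones (a quintuple-point germ for $n=4$, at least three corank-two germs for $n=5$), with each new sample's $\mu_I$ obtained through \cite{Fernandez-de-Bobadilla:2016}. The only step you leave implicit is that this $\mu_I$ computation is conditional: the algorithm returns the $\mathcal{A}_e$-codimension, and the paper must verify Mond's conjecture germ-by-germ (Cohen--Macaulayness of $M_y(\mathcal G)$ over a stable unfolding or stabilisation) and certify $\mathcal{A}$-finiteness via the Marar--Mond multiple-point criterion before equating that codimension with $\mu_I$.
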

 
 The coefficients $\sigma_k$, $c_k$ and $s_0$ are determined by $w$ and $d$ as follows: For fixed $n$, set 
$$\sigma_k=\sigma_{k,n}=\sum_{1\leq j_1<\ldots <j_k\leq n} w_{j_1}\cdot\ldots \cdot w_{j_k},$$
for $k=1,\dots, n$. To obtain the $c_{k}=c_{k,n}$, we set
$$\delta_k=\delta_{k,n}=\sum_{0\leq i_1<\ldots <i_k\leq {n}} d_{i_1}\cdot\ldots \cdot d_{i_k},$$ for $k=1,\dots, n+1$.
 Then,
$$c_{k,n}=\sum_{0\leq i\leq k}(-1)^{k-i}\delta_{i}\sum_{|\alpha|=k-i}w^{\alpha},$$
with the usual multi-index notation for $\alpha$.
Finally, $s_0=s_{0,n}$ is the rational function
$$s_0=\frac{\delta_{n+1}}{\sigma_{n}}.$$

\subsection*{Zero-stable invariants}
For any fixed $n$, certain stable multi-germs types appear, at most, on isolated points in the target of the stable multi-germs $F\colon(\C^n,S)\to (\C^{n+1},0)$. Such a stable type $\eta$ is called a \emph{$0$-stable type} for the dimensions $(n,n+1)$. Whenever a map-germ $F$ is stabilised, the target of the stable perturbation exhibits a certain number of multi-germs of type $\eta$. If $F$ is $\mathcal A$-finite, this number is independent of the chosen stabilisation and it is $\mathcal A$-invariant. This number is called the \emph{$0$-stable invariant} $\#\eta(F)$. We write $\# \eta$ for $\#\eta(F)$ if there is no risk of confusion.
 
As in the case of the image Milnor number, Ohmoto shows the existence of expressions for the $0$-stable invariants of weighted-homogeneous map-germs in terms of $\sigma_n, s_0$ and $c_k$, for $n\leq 5$. By \cite[Theorem 5.3]{Ohmoto:2014}, the $0$-invariants admit the expression
 \[\#\eta(F)=\frac{[tp(\eta)]_n}{\deg_1(\eta)w_1\dots w_n }.\]
The coefficient $\deg_1(\eta)$ is determined by the repetitions of branches defining $\eta$ and $[\omega]_n$ stands for the coefficient of the $n$-th degree term of $\omega$. The only non-trivial task is to obtain the Thom polynomial $tp(\eta)$ in the $c_k$ and $s_0$, which can be acomplished based on works Rim\'anyi \cite{Rimanyi:2001,Rimanyi2002Multiple-Point-} and Kazarian \cite{Kazarian2003Multising,Kazarian:2006}.

For completeness, we start with the formulas for $n\leq 3$, due to Ohmoto. 
The only $0$-stable invariant for $n=1$ is the number of double points:
\begin{enumerate}
\item[]$\# A_0^2= \dfrac{s_0-c_1}{2! \sigma_1}$.
 \end{enumerate}
For $n=2$, the number of triple points and cross-caps are:
\begin{enumerate}
\item[]$\# A_0^3=\dfrac{s_0^2-3s_0c_1+2c_1^2+2c_2}{3! \sigma_2}$,
\vspace{2mm}
\item[]$\# A_1=\dfrac{c_2}{\sigma_2}$.
\end{enumerate}
Finally, for $n=3$ the invariants are the number of quadruple points and number of transverse incidences of a curve of cross-caps with a regular branch:
\begin{enumerate}
\item[]$\# A_0^4= \dfrac{1}{4! \sigma_3}(s_0^3-6s_0^2c_1+11s_0c_1^2+8s_0c_2-6c_1^3-18c_1c_2-12c_3)$,
\vspace{2mm}
\item[]$\# A_0A_1=  \dfrac{1}{\sigma_3}(s_0c_2-2c_1c_2-2c_3)$.
 \end{enumerate}
 
The invariants for $n=4$, are the number of quintuple points, the incidence of two regular branches and surface of cross-caps, and the number of $A_2$ singularities:
\begin{enumerate}
\item[]$\# A_0^5 = \dfrac{1}{5!\sigma_4}(s_0^4-10s_0^3c_1+35s_0^2c_1^2+20s_0^2c_2-50s_0c_1^3-110s_0c_1c_2 \vspace{1.5mm}  \\ 
\phantom{\# A_0^5}-60s_0c_3+ 24c_1^4+144c_1^2c_2+216c_1c_3+48c_2^2+144c_4 )$
\vspace{2mm}
\item[]$\#A_0^2A_1 = \dfrac{1}{2!\sigma_4}\big(s_0^2 c_2 - 5 s_0 c_1 c_2- 4 s_0 c_3 + 6 c_1^2 c_2 + 14 c_1 c_3+ 4 c_2^2  +12 c_4\big)$
\vspace{2mm}
\item[]$\#A_2= \dfrac{1}{\sigma_4}\big(c_1 c_3 + c_2^2 + 2 c_4\big)$.
\end{enumerate}
The Thom polynomials which lead to the above expressions are obtained by dividing Kazarian's polynomials $m_{\eta}$ by a certain correction coefficient, see Section 2 and 5 of \cite{Kazarian2003Multising} for details.

For $n=5$, the invariants are the incidence of three regular branches and a 3-space of cross-caps, the incidence a regular branch with a curve of $A_2$, and the incidence of two three-spaces of cross-caps:
\begin{enumerate}
\item[]$\# A_0^6= \dfrac{1}{6!\sigma_5}\Big( s_0^5 -  15s_0^4 c_1 +5 s_0^3(17 c_1^2 + 8 c_2)  -   15s_0^2 (15 c_1^3+ 26 c_1 c_2 + 12 c_3)  \vspace{1.5mm}\\ 
\phantom{\# A_0^6}     + 2s_0 (137 c_1^4 + 607 c_1^2 c_2 + 164 c_2^2 + 738 c_1 c_3 + 432 c_4)  \vspace{1.5mm}\\
\phantom{\# A_0^6}      
-120 (c_1^5 + 10 c_1^3 c_2 + 10 c_1 c_2^2 + 25 c_1^2 c_3 + 12 c_2 c_3 + 
     38 c_1 c_4 + 24 c_5)\Big),$
\vspace{2mm}  
\item[]$\#A_0^3A_1= \dfrac{1}{3!\sigma_5}\Big((s_0^3c_2 - 3 s_0^2
 (3 c_1 c_2 + 2 c_3) + 
 2 s_0 (13 c_1^2 c_2 + 7 c_2^2 + 24 c_1 c_3 \vspace{1.5mm}\\
\phantom{\# A_0^3A_1} + 18 c_4)  -24 (c_1^3 c_2 + 4 c_1^2 c_3 + 3 c_2 c_3 + 2 c_1 (c_2^2 + 4 c_4) + 6 c_5)\Big)$,
\vspace{2mm}  
\item[]$\#A_0A_2= \dfrac{1}{\sigma_5}\Big(s_0(c_2^2 + c_1 c_3 + 2 c_4)-3 (c_1^2 c_3 + 2 c_2 c_3 + c_1 (c_2^2 + 4 c_4) + 4 c_5)\Big),$
\vspace{2mm}
\item[]$\#A_1^2= \dfrac{1}{\sigma_5}( s_0c_2^2-2 c_1^2 c_3-4 c_1 c_2^2 -8 c_2 c_3 -10 c_1 c_4 -12 c_5)$.
      \end{enumerate}
In this case, the polynomials $m_{\eta}$ cannot be found in Kazarian's paper; they are obtained by putting together Theorem 5.3 and the corresponding ingredients from the lists about residual classes $R_{\eta}$ and classes $n_{\eta}$ from \cite{Kazarian2003Multising}.

\subsection*{Relations between $\boldsymbol 0$-stable invariants in corank 1}
The corank of a map germ $f\colon (\C^n,0)\to(\C^{n+1},0)$ is 
\[\mathrm{corank}(f)=n- \mathrm{rank}(\mathrm df_0).\]
To better relate the corank with the weights and degrees, we restrict ourselves to map-germs in \emph{normal form}, that is, for a given $\mathcal A$-class of rank $r$, we only consider the representatives $F\colon (\C^{n}\times\C^r,0)\to (\C^{n+1}\times\C^r,0)$ of the form 
\[(z,y)\mapsto(f_y(z),y), \quad y\in \C^r.\]
Consider the germ \[f_{y_1,\dots,y_k}\colon (\C^{n+k},0)\to(\C^{n+1+k},0),\]
obtained by making the parameters $y_{k+1},\dots,y_{r}$ equal to zero.
To such an $F$, and a $0$-stable type $\eta\colon \C^{n+k}\to\C^{n+1+k}$, we associate the number \[\#\eta=\#\eta(f_{y_1,\dots,y_k})\]
if $f_{y_1,\dots,y_k}$ is $\mathcal{A}$-finite, and $\#\eta=\infty$ otherwise.

Observe that only the $\#\eta$ of stable types $\eta\colon\C^{n+r}\to\C^{n+1+r}$ are $\mathcal A$-invariants of $F$. The numbers $\#\eta$ for lower dimensional $\eta$ are just numbers that come in handy.

\begin{propo}\label{propoRels0InvSlice}
Let $F\colon (\C^{n},0)\to (\C^{n+1},0)$ be an $\mathcal A$-finite weighted-homogeneous map-germ of corank 1 in normal form and $f_{y_1,\dots,y_{n-2}}$ is also $\mathcal{A}$-finite. If $n\leq 5$, then
\begin{align*}
&\#A_0^{n+1}(F)=\# A_0^{n}(f_{y_1,\dots,y_{n-2}})\dfrac{(d_0-n w_1)(d_1-n w_1)}{n w_1 w_n}\\ 
&\#A_0^{n-2}A_1(F)=\# A_0^{n-3}A_1(f_{y_1,\dots,y_{n-2}}) \dfrac{(d_0-(n-1) w_1)(d_1-(n-1) w_1)}{(n-2) w_1 w_n}\\
& \#A_0^{n-2}A_1(F)= \# A_0^{n}(f_{y_1,\dots,y_{n-2}})\dfrac{n(n-1)w_1}{w_n}\\
&\#A_0^{n-4}A_2(F)= \# A_0^{n-3}A_1(f_{y_1,\dots,y_{n-2}})\dfrac{(n-3)w_1}{w_n}\\
&\#A_0A_2(F)= 2\# A_1^{2}(f_{y_1,\dots,y_{n-2}}).
\end{align*}
\end{propo}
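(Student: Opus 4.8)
\emph{Proof strategy.} The plan is to reduce every relation to an elementary identity in the three quantities $d_0,d_1,w_1$, exploiting the rigidity of a corank-$1$ normal form together with Ohmoto's formula $\#\eta=[tp(\eta)]_n/(\deg_1(\eta)\,w_1\cdots w_n)$ and the tabulated expressions of Section \ref{RESULTS}. First I would fix the grading. Writing $F$ in normal form as $(z,y_1,\dots,y_{n-1})\mapsto(g,h,y_1,\dots,y_{n-1})$, the last $n-1$ target coordinates are literally the source coordinates $y_1,\dots,y_{n-1}$, so $d_k=w_k$ for $k=2,\dots,n$. Consequently the slice $f_{y_1,\dots,y_{n-2}}$ is again a corank-$1$ normal form, obtained by deleting the pair $(w_n,d_n)=(w_n,w_n)$, with the same essential functions $g,h$ of degrees $d_0,d_1$ and essential weight $w_1$.

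The decisive observation is that, under the constraint $d_k=w_k$, the Chern data are independent of the ambient dimension. Indeed the generating series collapses,
\[
\sum_{k} c_k\,t^k=\frac{\prod_{i=0}^{n}(1+d_it)}{\prod_{j=1}^{n}(1+w_jt)}=\frac{(1+d_0t)(1+d_1t)}{1+w_1t},
\]
after cancelling the common factors $\prod_{k\ge 2}(1+w_kt)$, and likewise $s_0=\delta_{n+1}/\sigma_n=d_0d_1/w_1$. Thus $s_0$ and every $c_k$ are functions of $(d_0,d_1,w_1)$ alone and take the same value for $F$ and for any slice. In Ohmoto's formula the numerator $[tp(\eta)]_n$ is therefore a polynomial in these shared quantities, so the only genuinely dimension-dependent ingredient of $\#\eta$ is the weight product $\sigma_n=w_1\cdots w_n$ in the denominator.

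With this in hand each relation trivialises structurally: since $F$ carries $\sigma_n=w_1\cdots w_n$ and the slice carries $w_1\cdots w_{n-1}$, the common product $w_1\cdots w_{n-1}$ cancels, a single factor $w_n$ survives, and it is absorbed by the $w_n$ appearing in the denominator of every correction factor in the statement. What remains is an identity between the tabulated numerators (read off from Section \ref{RESULTS}) and an elementary rational function of $(d_0,d_1,w_1)$, together with a combinatorial constant coming from the ratio of the symmetry factors $\deg_1$. I would verify these finitely many identities directly, for each $n\le 5$ and each of the five families, substituting $s_0=d_0d_1/w_1$ and $c_k=[t^k](1+d_0t)(1+d_1t)/(1+w_1t)$. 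It is worth noting that the numerator factors $(d_0-kw_1)(d_1-kw_1)$ are exactly the weighted degrees of the order-$k$ divided differences of $g$ and $h$ cutting out the corank-$1$ multiple-point schemes, which explains the shape of the corrections.

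The main obstacle is not the geometry but the precise bookkeeping of the numerical constants: the symmetry factors $\deg_1(\eta)$, the exact integer in each denominator, and above all the correct interpretation of the auxiliary, non-$\mathcal A$-invariant slice quantities. The most delicate case is the last relation, where $\#A_1^2$ is evaluated on the four-dimensional slice although $A_1^2$ is not $0$-stable in that dimension; one must make sense of this number within the same formalism and confirm that the constant $2$ is exactly the ratio of the relevant symmetry factors. Once these constants are correctly pinned down, each identity closes by the cancellation described above.
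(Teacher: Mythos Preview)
Your proposal is correct and follows the same line the paper itself indicates. The paper does not give a formal proof of this proposition; immediately after stating it (and the diagram that follows) it simply remarks that ``these relations come as no surprise, since the invariants $\#\eta(F)$ are expressed in $c_{k,n}$ and $s_{0,n}$ and, as we will see, these behave well under unfoldings,'' and then records (in the display preceding Proposition~\ref{ChernUnfolding}) that for a one-parameter unfolding with $d_n=w_n$ one has $c_{k,n}=c_{k,n-1}$ and $s_{0,n}=s_{0,n-1}$. Your generating-function argument is a cleaner packaging of precisely this observation: the corank-$1$ normal form forces $d_k=w_k$ for all $k\geq 2$, the Chern series collapses to $(1+d_0t)(1+d_1t)/(1+w_1t)$, and both $s_0$ and every $c_k$ become functions of $(d_0,d_1,w_1)$ alone, identical for $F$ and for every slice. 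After that, the five relations reduce to finitely many algebraic identities between the tabulated numerators of Section~\ref{RESULTS} and the explicit correction factors, to be checked directly for each $n\leq 5$---exactly as you propose. Your remark that the factors $(d_0-kw_1)(d_1-kw_1)$ are the weighted degrees of the divided differences is a nice geometric gloss that the paper does not make explicit, and your flagged caution about interpreting $\#A_1^2$ on the four-dimensional slice mirrors the paper's own informal treatment of these auxiliary $\#\eta$ as ``just numbers that come in handy.''
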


\begin{re}
These relations do not hold in higher corank, as the singularities $\hat P_1$ and $\hat N_1$ from Table \ref{SharlandC5C6} show. One checks that $\#A_1^2(\hat P_1)=2$, $\#A_0A_2(\hat P_1)=6$, $\#A_1^2(\hat N_1)=40$ and $\#A_0A_2(\hat N_1)=84$.
\end{re}

The arrows in the following diagram indicate that, for map-germs of corank 1, the vanishing of the source forces the vanishing of the target.
\begin{equation}
\xymatrix{
A_0^2 \ar@{->}[r] \ar@{<->}[rd] & A_0^3 \ar@{->}[r] \ar@{<->}[rd] & A_0^4 \ar@{->}[r]  \ar@{<->}[rd] & A_0^5\ar@{->}[r] \ar@{<->}[rd] & A_0^6 \\
 &  A_1  \ar@{->}[r]  &  A_0 A_1 \ar@{->}[r]  &  A_0^2 A_1 \ar@{->}[r]   \ar@{<->}[rd]&  A_0^3 A_1\\ 
& & &  A_2 \ar@{<->}[ul]\ar@{->}[r]&  A_0 A_2 \\
& & & & A_1^2 \ar@{<->}[u]}\label{D}
\end{equation}

This relations come as no surprise, since the invariants $\#\eta(F)$ are expressed in $c_{k,n}$ and $s_{0,n}$ and, as we will see, these behave well under unfoldings. Let $F$ be a weighted-homogeneous $1$-parameter unfolding, and hence $d_n=w_n$. From the geometric construction giving rise to the functions $\sigma, s_0$ and $c_k$,  it follows easily that 
\begin{equation*}c_{k,n}(w,d)=c_{k,n-1}(w_1,\dots,w_{n-1},d_0,\dots,d_{n-1}),\quad \text{for all } k,\label{eqChernSlice}\end{equation*}
and that $s_{0,n}(w,d)=s_{0,n-1}(w_1,\dots,w_{n-1},d_0,\dots,d_{n-1})$.
This suggests the existence of a function $q(w,d)$ in variables $w$ and $d$ satisfying \[c_{k,n}(w,d)=c_{k,n-1}(w_1,\dots,w_{n-1},d_0,\dots,d_{n-1})+(d_n-w_n)q(w,d).\]
 Indeed, a little combinatorics shows the following:
 \begin{propo}\label{ChernUnfolding}For any positive integer $n$, we have
  \[c_{k,n}=c_{k,n-1}+(d_n-w_n)\sum_{i=0}^{k-1}(-1)^iw_n^ic_{k-i-1,n-1}\]
  and
  \[s_{0,n}=s_{0,n-1}\frac{d_n}{w_n}.\]
 \end{propo}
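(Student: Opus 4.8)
The goal is to prove Proposition \ref{ChernUnfolding}, two formulas relating the quantities $c_{k,n}, s_{0,n}$ for a weighted-homogeneous map to those for a one-parameter unfolding (where $d_n = w_n$ is relaxed and we track the dependence on $d_n - w_n$). These are purely combinatorial identities once the definitions of $\sigma_{k,n}$, $\delta_{k,n}$, $c_{k,n}$ and $s_{0,n}$ given right after Theorem \ref{FormulaNew} are unwound, so the plan is a direct computation organized around how the top index $n$ enters each symmetric function.

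The plan is to first handle the $s_0$ formula, which is the easier of the two. By definition $s_{0,n} = \delta_{n+1,n}/\sigma_{n,n}$. The numerator $\delta_{n+1,n} = \sum_{0 \le i_1 < \cdots < i_{n+1} \le n} d_{i_1}\cdots d_{i_{n+1}}$ is the elementary symmetric function of top degree $n+1$ in the $n+1$ variables $d_0, \dots, d_n$, hence equals the single product $d_0 d_1 \cdots d_n$; likewise $\sigma_{n,n} = w_1 \cdots w_n$. Therefore $s_{0,n} = (d_0 \cdots d_n)/(w_1 \cdots w_n)$, and pulling out the last factor gives $s_{0,n} = s_{0,n-1} \cdot (d_n/w_n)$ immediately, since $s_{0,n-1} = (d_0 \cdots d_{n-1})/(w_1 \cdots w_{n-1})$.

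For the Chern formula I would work from the displayed definition $c_{k,n} = \sum_{0 \le i \le k} (-1)^{k-i} \delta_{i,n} \sum_{|\alpha|=k-i} w^\alpha$, where the inner sum is the complete homogeneous symmetric polynomial $h_{k-i}$ in the weights $w_1, \dots, w_n$. The two $n$-dependent ingredients are $\delta_{i,n}$ (elementary symmetric in $d_0, \dots, d_n$) and $h_{k-i}$ (complete symmetric in $w_1, \dots, w_n$). First I would record the standard recursions for adjoining one variable: for the elementary symmetric functions, $\delta_{i,n} = \delta_{i,n-1} + d_n\, \delta_{i-1,n-1}$, obtained by splitting the subsets of $\{0,\dots,n\}$ according to whether they contain $n$; and for the complete symmetric functions in the weights, adding the variable $w_n$ gives $h_{j}(w_1,\dots,w_n) = \sum_{t \ge 0} w_n^{\,t}\, h_{j-t}(w_1, \dots, w_{n-1})$. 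Substituting both recursions into the definition of $c_{k,n}$ and separating the terms that carry a factor of $d_n$ (via $d_n\,\delta_{i-1,n-1}$) and the terms that carry a positive power of $w_n$ should let me collect, on one hand, exactly $c_{k,n-1}$ (the part with $d_n$ replaced by nothing and $w_n$ absent), and on the other hand a remainder proportional to $(d_n - w_n)$.

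The delicate bookkeeping, and the step I expect to be the main obstacle, is reorganizing this double substitution so that the remainder collapses to the clean closed form $(d_n - w_n)\sum_{i=0}^{k-1}(-1)^i w_n^i c_{k-i-1,n-1}$. The natural strategy is to first verify the auxiliary identity $c_{k,n}\big|_{d_n = w_n} = c_{k,n-1}$ — i.e. that substituting $d_n = w_n$ kills the extra variable entirely, which is exactly the relation observed in the text preceding the proposition and reflects the geometric stability of the Chern data under trivial unfoldings. Granting that, one knows $c_{k,n} - c_{k,n-1}$ is divisible by $(d_n - w_n)$, and it remains only to identify the quotient. I would compute the quotient by differentiating formally with respect to $d_n$, or equivalently by isolating the coefficient of $d_n$ in $c_{k,n}$ using the $\delta$-recursion: the coefficient of $d_n$ is $\sum_{i \ge 1} (-1)^{k-i}\delta_{i-1,n-1}\, h_{k-i}(w_1,\dots,w_n)$. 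Re-indexing by $j = i-1$ turns this into $\sum_{j \ge 0}(-1)^{k-1-j}\delta_{j,n-1}\,h_{k-1-j}(w_1,\dots,w_n)$, which is precisely $c_{k-1,n}$ evaluated with the weights still including $w_n$; expanding each $h_{k-1-j}(w_1,\dots,w_n)$ once more via the $w_n$-recursion and matching terms against $\sum_{i=0}^{k-1}(-1)^i w_n^i c_{k-i-1,n-1}$ finishes the identification. The only real care needed is to keep the two superimposed geometric-type sums (one in $d_n$, one in powers of $w_n$) from tangling, so I would fix $k$ and induct on $k$ if a direct telescoping proves unwieldy, using the base case $c_{0,n} = 1$ to anchor the induction.
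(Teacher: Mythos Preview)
Your proposal is correct. The paper offers no detailed proof beyond the phrase ``a little combinatorics,'' but the surrounding discussion points to the generating-function identity
\[
\sum_{k\ge 0} c_{k,n}\,a^k \;=\; \frac{\prod_{i=0}^{n}(1+d_i a)}{\prod_{j=1}^{n}(1+w_j a)}
\]
coming from $c(F)$ in Section~\ref{secOhmotoFormula}. From that viewpoint the Chern formula is a one-line computation: peel off the last factors to get $\sum_k c_{k,n}a^k=\frac{1+d_n a}{1+w_n a}\sum_k c_{k,n-1}a^k$, rewrite $\frac{1+d_n a}{1+w_n a}=1+(d_n-w_n)\sum_{i\ge 0}(-1)^i w_n^i a^{i+1}$, and read off the coefficient of $a^k$. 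Your route---expanding $\delta_{i,n}$ and $h_j(w_1,\dots,w_n)$ via their one-variable recursions, observing linearity in $d_n$, and then identifying the quotient $(c_{k,n}-c_{k,n-1})/(d_n-w_n)$---arrives at the same place and is perfectly valid, but it requires the extra bookkeeping you anticipate (and the object you call ``$c_{k-1,n}$ with the weights still including $w_n$'' is really a mixed quantity with $\delta_{\bullet,n-1}$ and $h_\bullet(w_1,\dots,w_n)$, so that description is slightly loose). The generating-function argument buys you exactly the telescoping you were hoping for without any induction; your approach has the virtue of staying entirely at the level of the explicit symmetric-function definition, with no appeal to the Chern-class interpretation.
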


\section{Ohmoto's image Milnor number formulas}\label{secOhmotoFormula}

We give some theoretical background from \cite{Ohmoto:2014} and explain our methods used to prove  formulas in Theorem \ref{FormulaNew}. The method is illustrated in detail for $n=2$ and briefly for $n=3$.

\subsection*{Characteristic classes and image Milnor number}
Our interpolation method is based entirely on Proposition \ref{thmFormulaOhmoto}, which is an immediate consequence of Ohmoto's Theorem \ref{ThmOhmoto}.  The reader should bear in mind that Proposition \ref{thmFormulaOhmoto} expresses $\mu_I$ in terms of weights and degrees, exclusively. This allows for the interpolation method to be applied blindly, independently of its origins in the theory of characteristic classes.  

 For $\alpha=(\alpha_0,\dots,\alpha_n)$ we let $\| \alpha\|=\alpha_0+\sum_{k=1}^n k \alpha_k$ and $c^{\alpha}=s_0^{\alpha_{0}}c_1^{\alpha_{1}}\ldots c_n^{\alpha_{n}}$.

\begin{propo}\label{thmFormulaOhmoto} There are unique $b_{\alpha}\in \mathbb{Q}$, with $0\neq\alpha\in\mathbb{N}^{6}$, such that any $\mathcal{A}$-finite weighted homogeneous map-germ $F\colon (\C^n,0)\rightarrow(\C^{n+1},0)$, with $n\leq 5$, satisfies
\begin{equation}\label{F}
\mu_I(F)=(-1)^n\frac{\sum_{\|\alpha\|\leq n} b_{\alpha} c^{\alpha}\sigma_{n-\| \alpha\|}}{\sigma_n}.
\end{equation}
\end{propo}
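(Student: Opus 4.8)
The plan is to derive Proposition~\ref{thmFormulaOhmoto} as a direct consequence of Ohmoto's Theorem~\ref{ThmOhmoto}, whose statement provides an expression for $\mu_I(F)$ in terms of the $n$-th degree truncation of the Segre--MacPherson Thom polynomial series $tp^{\text{SM}}(\alpha_{\text{image}})$. The key observation is structural rather than computational: that series has the form $\sum_\alpha b_\alpha c^\alpha$ with rational coefficients $b_\alpha$ and monomials in the variables $s_0$ and $c_1,\dots,c_n$, where the grading is governed by $\|\alpha\|=\alpha_0+\sum_{k=1}^n k\alpha_k$. Since $s_0$ has weight $1$ and each $c_k$ has weight $k$ in this grading, a monomial $c^\alpha$ contributes in degree $\|\alpha\|$, so the $n$-th truncation picks out exactly the terms with $\|\alpha\|\leq n$.

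First I would recall precisely the statement of Ohmoto's theorem, extracting from it that $\mu_I(F)$ equals, up to the overall sign $(-1)^n$ and the normalising denominator $\sigma_n$, a finite sum of the truncated series evaluated on the functions $s_0(w,d)$ and $c_k(w,d)$. Next I would make explicit the homogeneity bookkeeping: the factor $\sigma_{n-\|\alpha\|}$ appearing alongside $c^\alpha$ is forced by degree reasons, since $\mu_I$ is a number (weight zero after division by $\sigma_n$) and $\sigma_j$ carries weight $j$; thus a monomial of weight $\|\alpha\|$ must be paired with $\sigma_{n-\|\alpha\|}$ to reach total weight $n$ in the numerator. This pairing, together with the truncation $\|\alpha\|\leq n$, yields exactly the shape of formula~\eqref{F}. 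The restriction $n\leq 5$ enters because Ohmoto's determination of $tp^{\text{SM}}(\alpha_{\text{image}})$ up to the required degree relies on facts known only for Morin singularities in these dimensions.

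The uniqueness claim I would handle separately and last, since it is logically independent of the particular values produced by the characteristic-class machinery. The argument is that the coefficients $b_\alpha$ with $\|\alpha\|\leq n$ are determined by the values $\mu_I(F)$ across the class of $\mathcal{A}$-finite weighted-homogeneous germs: two distinct coefficient tuples would give the same rational function in $(w,d)$ for every admissible grading, and one shows this cannot happen by exhibiting that the monomials $c^\alpha\sigma_{n-\|\alpha\|}$, viewed as functions of the weights and degrees through the defining formulas for $\sigma_k$, $c_k$ and $s_0$, are linearly independent over $\mathbb{Q}$. Equivalently, one checks that the evaluation map from coefficient tuples to functions of $(w,d)$ is injective on the relevant finite-dimensional space.

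The main obstacle I expect is precisely this independence verification. The functions $\sigma_k(w)$, $\delta_k(d)$ and the derived $c_k(w,d)$ and $s_0(w,d)$ are algebraically intertwined through the multi-index expressions given after the theorem, so it is not immediate that the finite collection $\{c^\alpha\sigma_{n-\|\alpha\|}\}_{\|\alpha\|\leq n}$ remains linearly independent as functions of the underlying weights and degrees rather than as formal symbols. One clean way to settle this is to treat $s_0,c_1,\dots,c_n$ as algebraically independent transcendentals over $\mathbb{Q}$ --- which is justified because the map $(w,d)\mapsto(s_0,c_1,\dots,c_n)$ has generic maximal rank, so relations among the $c^\alpha\sigma_{n-\|\alpha\|}$ as functions of $(w,d)$ would force formal relations --- and then note that distinct monomials $c^\alpha$ in these symbols are manifestly independent, while the $\sigma_{n-\|\alpha\|}$ factors cannot produce cancellations because they are the unique bidegree-completing factors. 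Verifying the generic rank condition, i.e.\ that the $s_0$ and $c_k$ are genuinely algebraically independent as the grading varies, is the delicate point and is what I would spend the most care on.
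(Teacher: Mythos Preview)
Your existence argument is the paper's: both unpack Ohmoto's Theorem~\ref{ThmOhmoto}. One point to sharpen: the factor $\sigma_{n-\|\alpha\|}$ is not ``forced by degree reasons'' in the abstract sense you suggest, but arises concretely from the explicit factor $c(E_0)=1+\sigma_1 a+\dots+\sigma_n a^n$ in Ohmoto's formula. Writing $tp^{\text{SM}}(\alpha_{\text{image}})=1+\sum_{\alpha\neq 0}b_\alpha c^\alpha$ and observing that after evaluation at $s_0(F)=s_0 a$, $c_k(F)=c_k a^k$ each monomial $c^\alpha$ carries $a^{\|\alpha\|}$, the coefficient of $a^n$ in the product $c(E_0)\cdot tp^{\text{SM}}$ is $\sigma_n+\sum_{\alpha\neq 0}b_\alpha c^\alpha\sigma_{n-\|\alpha\|}$; dividing by $[c_n(E_0)]_n=\sigma_n$ and using $\mu_I=(-1)^n(\chi-1)$ gives \eqref{F} together with the restriction $\alpha\neq 0$. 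This is a direct computation, not a homogeneity principle.

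For uniqueness the routes diverge. The paper does not prove abstract linear independence of the functions $c^\alpha\sigma_{n-\|\alpha\|}$; uniqueness is instead inherited from the uniqueness of the series $tp^{\text{SM}}(\alpha_{\text{image}})$ itself, and is then re-established constructively by the interpolation in the remainder of Section~\ref{secOhmotoFormula} and in Section~\ref{SecNew}, where for each $n\leq 5$ enough samples are exhibited to determine every $b_\alpha$. Your proposed alternative---checking that $(w,d)\mapsto(s_0,c_1,\dots,c_n)$ has generic maximal rank---would also work and is more conceptual, but your handling of the $\sigma_j$ factors (``cannot produce cancellations because they are the unique bidegree-completing factors'') is not an argument; the clean fix is to note that for fixed generic $w$ the $\sigma_j(w)$ are nonzero scalars, so the question reduces to independence of the monomials $c^\alpha$ in $s_0,c_1,\dots,c_n$ as functions of $d$ alone.
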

The remaining of the section is devoted to explaining how Proposition \ref{thmFormulaOhmoto}  derives immediately from the following result.

\begin{teo}\label{ThmOhmoto}  \cite[Theorem 6.20]{Ohmoto:2014}
Let $F\colon(\C^n,0)\to(\C^{n+1},0)$ be $\mathcal A$-finite and $n\leq 5$. The Euler characteristic of the image of a stable perturbation $F_y$ of $F$ is
\begin{equation}\label{FOh}
\chi(\Im(F_y))=\frac{[c(E_0)\cdot tp^{\text{SM}}(\alpha_{\text{image}})(c(F))]_{n}}{[c_n(E_0)]_n}.
\end{equation}
\end{teo}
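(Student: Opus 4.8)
The plan is to realise $\chi(\Im(F_y))$ as the degree of a Chern--Schwartz--MacPherson (CSM) class and to evaluate that class through the universal Segre--MacPherson Thom polynomial. First I would use the weighted-homogeneity of $F$ to replace the germ by a \emph{global} algebraic model. The weights $w$ and degrees $d$ equip source and target with compatible $\C^*$-actions, so a representative of $F$ extends to a proper map between (weighted) projective spaces whose only instability lies over the origin. In this model the virtual normal bundle $f^{*}TP-TN$ has total Chern class $\prod_{j=0}^{n}(1+d_j h)\big/\prod_{i=1}^{n}(1+w_i h)$, where $h$ is the hyperplane class; expanding this rational expression produces exactly the functions $c_k$ and $s_0$ of weights and degrees appearing in (\ref{F}), while the bundle $E_0$, assembled from the source weights, satisfies $[c_n(E_0)]_n=\sigma_n$ --- the denominator of (\ref{FOh}).

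The core step is MacPherson's theory of characteristic classes of constructible functions. For a constructible function $\varphi$ on a compact complex variety one has $\deg c_{*}(\varphi)=\int\varphi\,d\chi$; taking $\varphi=\mathbbm 1_{\Im(F_y)}$ gives $\chi(\Im(F_y))=\int c_{*}(\mathbbm 1_{\Im(F_y)})$. The essential input I would import from the work of Kazarian and Ohmoto is the \emph{universality} of this CSM class: the image of a stable map is the union of its mono- and multi-singularity strata, and the pushforward of $c_{*}(\mathbbm 1_{\Im f})$ to the target is a single universal polynomial $tp^{\mathrm{SM}}(\alpha_{\mathrm{image}})$ in the Chern classes $c(f)$. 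This is precisely where the hypothesis $n\leq 5$ is used: only in this range are all stable germs of Morin type, so the relevant multi-singularity loci are classified and their Segre--MacPherson Thom polynomials can be determined, for instance by Rim\'anyi's restriction method adapted to CSM classes.

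Finally I would convert the integral over the compactified target into the coefficient extraction displayed in (\ref{FOh}). Since $F$ is $\A$-finite, the only non-trivial local contribution sits over the origin, and the $\C^*$-equivariant structure of the weighted-homogeneous model allows the contribution at infinity to be discarded. Capping $tp^{\mathrm{SM}}(\alpha_{\mathrm{image}})(c(F))$ with $c(E_0)$, reading off the degree-$n$ part and dividing by $[c_n(E_0)]_n$ is exactly the Gysin/residue (equivariant localization) operation that isolates this local term, and it yields formula (\ref{FOh}).

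The main obstacle is the universality claim of the second step: proving that the CSM class of the image is governed by a single series $tp^{\mathrm{SM}}(\alpha_{\mathrm{image}})$ requires the full machinery of classifying spaces of singularities together with effective control over the multiple-point contributions. It is exactly this control that restricts the rigorous range to $n\leq 5$, and recovering it for larger $n$ would depend on the multi-singularity conjecture mentioned in the abstract.
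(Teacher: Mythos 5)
First, a point of comparison that matters here: the paper does \emph{not} prove this statement. Theorem \ref{ThmOhmoto} is quoted verbatim from Ohmoto (\cite[Theorem 6.20]{Ohmoto:2014}) and is used as a black box; the surrounding text of Section \ref{secOhmotoFormula} only unpacks the notation ($E_0$, $E_1$, $c(F)$, $s_0(F)$, $tp^{\text{SM}}(\alpha_{\text{image}})$) so that Proposition \ref{thmFormulaOhmoto} can be read off from it. Indeed the whole design of the paper is to avoid re-entering the characteristic class machinery: the interpolation method needs only the \emph{shape} of formula (\ref{FOh}). So your proposal can only be measured against Ohmoto's cited proof, and against the paper's exposition of its ingredients.

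Measured that way, your sketch names the right ingredients (CSM classes, universality of the image Thom polynomial, and the correct reason for $n\leq 5$: in these dimensions all stable germs $(\C^n,0)\to(\C^{n+1},0)$ have corank at most one, i.e.\ are Morin), but it has a genuine gap, and one step as described would fail. The gap: the universality of $tp^{\text{SM}}(\alpha_{\text{image}})$, which you propose to ``import from Kazarian and Ohmoto,'' is essentially the content of the theorem you are supposed to prove; once it is assumed, what remains is the local-to-universal comparison, and that is precisely where your argument deviates from what can be made to work. You propose to compactify $F$ to a proper map of weighted projective spaces and kill the contribution at infinity by equivariant localization. But an $\mathcal A$-finite weighted-homogeneous germ does not in general extend to a map of compactifications that is stable (or even defined) along the divisor at infinity, and nothing in $\mathcal A$-finiteness controls that locus; moreover $\Im(F_y)$ is a non-compact Stein representative, so the identity $\chi=\deg c_*(\mathbbm{1})$ cannot be invoked without a properness argument. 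Ohmoto's construction, which the paper summarizes, avoids compactification altogether: the $\C^*$-equivariance of $F$ yields the universal bundle map $\tilde F\colon E_0\to E_1$ over $\PP^\infty$ (a Borel-type construction, with $E_0=\ell^{\otimes w_1}\oplus\dots\oplus\ell^{\otimes w_n}$, $E_1=\ell^{\otimes d_0}\oplus\dots\oplus\ell^{\otimes d_n}$), the Segre--MacPherson class is computed equivariantly there, and the identification of the degree-$n$ coefficient of $c(E_0)\cdot tp^{\text{SM}}(\alpha_{\text{image}})(c(F))$ divided by $[c_n(E_0)]_n=\sigma_n$ with $\chi(\Im(F_y))$ for an honest stable perturbation is the substance of \cite[Theorem 6.20]{Ohmoto:2014}, not a routine Gysin or residue manipulation. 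As a plan of attack your outline is reasonable and correctly flags its own weakest point, but as a proof it both assumes the main theorem of the cited work and replaces its key comparison step by a compactification argument that does not go through.
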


Observe that the left hand side of Equation (\ref{F}) is $\mu_I(F)=(-1)^n(\chi(\Im(F_y))-1)$. Now we proceed to describe the ingredients in the right hand side of Formula (\ref{FOh}).

Let $\ell$ be the dual tautological line bundle over $\PP^\infty$. Associated to the grading $(w,d)$, there are the two bundles 
\[E_0:=\ell^{\otimes w_1}\oplus\dots\oplus \ell^{\otimes w_n}\quad\text{and}\quad E_1:=\ell^{\otimes d_0}\oplus\dots\oplus \ell^{\otimes d_n}.\]

The cohomology of $\PP^\infty$ is isomorphic to the polynomial ring $\mathbb Z[a]$ and, under this isomorphism, the total Chern class of $\ell$ is $c(\ell)=1+a$. From this we obtain the total Chern classes 
\[c(E_0)=\prod_{j=1}^n(1+w_j a)\quad \text{and}\quad c(E_1)=\prod_{i=0}^n(1+d_i a).\]

One can construct a \emph{universal map}  $\tilde{F}\colon E_0\to E_1$ whose restriction to a each fiber is $\mathcal{A}$-equivalent to $F$ \cite{Ohmoto:2014}. By abuse of notation, one writes $c(F)$ for the total chern class $c(\tilde{F})=c(\tilde{F}^*TE_1-TE_0)$ of the virtual normal bundle. One checks that
\[c(F)=\frac{\prod_{i=0}^n(1+d_i a)}{\prod_{j=1}^n(1+w_j a)}.\]
The functions $\sigma_j(w,d)$, $\delta_i(w,d)$ and $c_k(w,d)$ from Section \ref{RESULTS} are precisely the coefficients in the graded decompositions
\begin{align*}
c(E_0)&=1+\sigma_1a+\dots+\sigma_na^n,\\
c(E_1)&=1+\delta_1a+\dots+\delta_{n+1}a^{n+1},\\
c(F)&=1+c_1 a+c_2 a^2+c_3a^3\dots.
\end{align*}

 The term $tp^{\text{SM}}(\alpha_{\text{image}})$ is the Segre-MacPherson Thom polynomial of the constructible function $\alpha_{\text{image}}$. This is an extension of the classical Thom  polynomial in the following sense:

The classical Thom polynomial \cite{Thom:1956} of a stable mono-singularity type $\eta\colon (\C^n,0)\to(\C^{n+k},0)$ is the unique polynomial $tp(\eta)\in \Z[c_1,c_2,\dots]$ such that, for any stable map $f\colon M^m\to N^{m+k}$, the following holds
\[\text{Dual}[\overline{\eta(f)}]=tp(\eta)(c(f)).\]
The left hand side of the equality is the Poincaré dual of the fundamental class of the closure of the singularities of type $\eta$ exhibited by $f$. The right hand side is the evaluation of $tp(\eta)$ in the total Chern class $c(f)=c(f^*TN-TM)$ of the virtual normal bundle of $f$.

 For multi-singularity types $\underline{\eta}$, the definition was extended by Kazarian \cite{Kazarian2003Multising} to Thom polynomials $tp(\underline{\eta})$
 depending on further variables $s_I$. They satisfy the analogous property $\text{Dual}[\overline{\underline{\eta}(f)}]=tp(\underline{\eta})(s_I(f),c(f))$. Here, the class $s_I(f)$ is the \emph{Landweber-Novikov class} of $f$, it is defined by
\[s_I(f)=f^* f_*(c_1(f)^{i_1}c_2(f)^{i_2}\dots),\] for the multi-index $I=(i_1i_2\dots)$. The 0-th Landweber-Novikov class of $f$ is $s_0(f)=f^*f_*(1)=\frac{c_{top}(f^*TN)}{c_{top}(TM)}$. For simplicity, we consider the evaluation of $tp(\overline{\eta})$ in the Chern classes $c_i(f)$ and the class $s_0(f)$. In particular, the universal map above gives \[s_0(F)=\frac{c_{n+1}(E_1)}{c_n(E_0)}=s_0a.\]

The Segre-MacPherson Thom polynomial $tp^{\text{SM}}(\underline{\eta})$ is the unique series in $s_0$, $c_i$ satisfying the following similar property \cite{Ohmoto:2014}: Let $i\colon X\hookrightarrow M$ be a closed embedding of an algebraic subvariety $X$ in an algebraic manifold $M$. Consider $\mathcal{F}(X)$ the abelian ring of constructible functions on $X$. The Chern-Schwartz-MacPherson natural transformation \cite{MacPherson1974} determines a morphism $c_*\colon \mathcal{F}(X)\to H_*(X)$. The Segre-Schwartz-MacPherson class of the embedding $i$ is 
\[s^{\text{SM}}(X,M):=c(i^*TM)^{-1}\cap c_*(\mathbbm{1}_X).\]
The above mentioned property satisfied by $tp^{\text{SM}}(\underline{\eta})$ is  \[\text{Dual }(i_*s^{\text{SM}}(\overline{\underline{\eta}(f)},M))=tp^{\text{SM}}(\underline{\eta})(s_0(f),c(f)).\]

The Segre-MacPherson Thom polynomial has the form $tp^{\text{SM}}(\eta)=tp(\eta)+ \text{ higher degree terms}$. 
The definition of Segre-MacPherson Thom polynomials extends to certain constructible functions $\alpha$, so that $tp^{\text{SM}}(\mathbbm{1_{\underline \eta}})=tp^{\text{SM}}({\underline \eta})$. For $f\colon M\to N$, there is such a constructible function $\alpha_{image}$,  determined by $\mathbbm{1}_{f(M)}=f_*(\alpha_{image})$.

Finally, $[c(E_0)\cdot tp^{\text{SM}}(\alpha_{\text{image}})(s_0(F),c(F))]_{n}$ stands for the $n$-th degree part of the series $c(E_0)\cdot tp^{\text{SM}}(\alpha_{\text{image}})(s_0(F),c(F))$ in the variable $a$ (note that, by abuse of notation, the term $tp^{\text{SM}}(\alpha_{\text{image}})(s_0(F),c(F))$ appears in \cite{Ohmoto:2014} as $ tp^{\text{SM}}(\alpha_{\text{image}})$).

\subsection*{How to obtain the $\boldsymbol{\mu_I}$ formulas} 
We write the multi-indices $\alpha\in \mathbb N^6$ of Proposition \ref{thmFormulaOhmoto}  only up to their last non zero entry. 
For example, we write $(0,1)$ for $\alpha=(0,1,0,0,0,0)$.

Our strategy is based on the following simple interpolation idea: pick any $\mathcal A$-finite map-germ $F$, with known image Milnor number. Every possible weigths $w$ and degrees $d$ of $F$ determine values $\sigma_k$, $s_0$ and $c_k$, and the formula  yields a linear equation in the variables $b_\alpha$. The data \[\tau(F):=(w,d,\mu_I(F))\] will be called a \emph{sample} of $F$.
The $b_\alpha$ are determined after sampling singularities as many times as the number of $b_\alpha$, provided that each sample gives an equation which is independent from the preceding ones. 

The initial cases are somehow trivial, because the literature contains enough singularities to complete the interpolation. The difficulties arise in $n\geq 4$, due to the lack of $\mathcal A$-finite map-germs with known $\mu_I$. The key points of our interpolation strategy are contained in Remarks \ref{DifferentSamples} and \ref{ZeroInvariants}, Proposition \ref{EquationStable} and  \ref{NoFirstExamples}.

Ohmoto's formula for $n=2$ (see \cite[Example 6.21]{Ohmoto:2014}) can be rewriten as
 \[\mu_I(F)=\dfrac{1}{\sigma_2}\big(\dfrac{1}{2!}(-s_0+c_1) \sigma_1+
\dfrac{1}{3!}(s_0^2-c_1^2-c_2)\big).\]
To recover the formula, we need to determine the six $b_{\alpha}$ with $\| \alpha \| \leq 2$, hence we must find six samples giving rise to independent equations. Again, all maps will be chosen in normal form. Since the computations involved often become hard, we want to sample the simplest singularities first. 

We start in corank 0, that is, with $d_1=w_1$ and $d_2=w_2$. Since corank 0 map-germs are regular, we know  $\mu_I(F)=0$. Replacing $d_1$ and $d_2$ by $w_1$ and $w_2$, the formula (\ref{F}) reads
\begin{equation} \label{FormulaCorank0}
0=\dfrac{d_0((b_{02}+b_{11}+b_{2})d_0+(b_{01}+b_{1})(w_1+w_2))}{w_1w_2}.
\end{equation}

The regular map 
\[R\colon (z,y)\mapsto (0,z,y)\]
admits samples $\tau_1(R)=((1,1),(1,1,1),0)$ and $\tau_2(R)=((1,1),(2,1,1),0)$. By substitution, it follows
\[b_{02}+b_{11}+b_{2}=b_{01}+b_{1}=0.\]

\begin{re}\label{DifferentSamples}
\emph{Different samples of a same singularity may give independent equations}. Observe that 
1) this is exactly what happened with $\tau_1(R)$ and $\tau_2(R)$, and it will continue to happen for all higher degrees. However,
2) let $(w,d)$ and $(w',d')$ be two gradings of a map-germ $F$. If $(w',d')=\lambda (w,d)$, for $\lambda\in\mathbb Q$, then the samples $\tau_1(F)=(w,d,\mu_I(F))$ and $\tau_2(F)=(w',d',\mu_I(F))$ give rise to the same equation on $b_{\alpha}$. This is because the coefficients of each $b_{\alpha}$ in Formula (\ref{F}) are homogeneous rational functions of degree zero in the weights and degrees. 3)  Two representatives $F$ and $F'$ of the same $\mathcal A$-class may produce different sets of samples. For example, the representative $(z,z,y)$ in the $\mathcal{A}$-class of $R$ admits $\tau_1(R)$ as a sample but not $\tau_2(R)$. The map $(z^2,z,y)$ admits the opposite combination. An strategy to find better representatives is to eliminate monomials in the coordinate functions. For instance, the cross-cap $(z^2,z^3+yz,y)$ is $2$-determined and by eliminating the $z^3$ term, the resulting representative admits more samples. 
\end{re}

We claim that no further independent equations can be found by sampling the regular map $R$. This is because for any $b_\alpha$, satisfying the above equations, the right hand side of equation (\ref{FormulaCorank0}) vanishes. Having exhausted $R$, we must move to the case where $d_1\neq w_1$, and the simpler such singularities are the map-germs of corank 1. 

Every singular map-germ has $\#A_1>0$, but we still want to start with the simpler ones, having $\#A_0^3=0$ and $\#A_1$ as low as possible. Consider the cross-cap, parameterised as
\[A_1\colon (z,y)\mapsto(z^2,yz,y),\]
and the samples
\[\tau_i(A_1)=((1,i),(2,i+1,i),0).\]
The samples $\tau_1(A_1)$ and $\tau_2(A_1)$ give equations 
\[0=8b_{1}+6b_{01}+16b_{2}+12b_{11}+9b_{02}+b_{001}\]
and 
\[0=9b_{1}+6b_{01}+18b_{2}+12b_{11}+8 b_{02}+b_{001},\]
respectively.
We show now that no more independent equations can be obtained from map-germs having $\#A_1=1$ and $\#A_0^3=0$. The idea is to look at the expressions of these invariants for corank 1 germs:
\[\#A_1=\frac{(d_0-w_1)(d_1-w_1)}{w_1w_2},\quad \#A_0^3=\frac{(d_0-2w_1)(d_1-2w1)}{6w_1^2}\#A_1.\]
If $\#A_1$ does not vanish, the condition $\#A_0^3=0$ implies $d_0=2w_1$ or $d_1=2w_1$ and, by a permutation of the coordinate functions of the map-germ, we may choose $d_1=2w_1$.  Replacing $d_1$ by $2w_1$  in the expression $\# A_1=1$, we obtain $w_2=d_0-w_1$. Eliminating four of the $b_\alpha$ by means of the previous equations and impossing the conditions $d_1=2w_1$ and $d_2=w_2=d_0-w_1$, we obtain a closed expression for $\mu_I$, independent of the remaining $b_{\alpha}$. This means that the last two $b_{\alpha}$ cannot be found by taking samples satisfying such conditions.  This illustrates another key point of the interpolation strategy:

\begin{re}\label{ZeroInvariants} \emph{The numbers $\#\eta$ may be used to decide whether a singularity has been exhausted.}
\end{re}

Since the cross-cap is the only singular stable mono-germ for dimensions $(2,3)$, from now on we need to take non-stable map-germs into account. A known singularity with $\# A_1=2$ and $\#A_0^3=0$ is 
\[S_1\colon (z,y)\mapsto(z^2,z^3+y^2z,y).\]
It is well known that $S_1$ has $\mathcal A_e$-codimension one and, since Mond's conjecture holds for $n\leq 2$ (see \cite{Mond:1995,Jong:1991}), this number is precisely $\mu_I(S_1)$.

After one sampling of $S_1$, one checks easily that we need a sample with $\#A_0^3\neq 0$.
The interpolation is finished after sampling Mond's map-germ
\[H_2\colon (z,y)\mapsto (z^3, z^5+yz,y),\] 
which has $\mu_I(H_2)=2$.

Table \ref{Invariants2} contains numbers associated to the interpolation samples.  Horizontal lines separate changes in corank. The number $d_0$ is only included for $\tau_1(R)$ and $\tau_2(R)$, because it does not carry any clear geometric information about the rest of singularities. The $\infty$ symbol means that $\# A_0^2$ is not well defined for the corresponding slice. For higher $n$, there will be too many associated numbers, and we will include only the essential ones, based on Diagram (\ref{D}).
\begin{table}[ht]
\setlength{\extrarowheight}{2.5pt}
\begin{center}
\begin{tabular}{ c  c  c  c  c }
Sample & $d_0$ & $\#A_0^2$ & $\#A_1$  & $\#A_0^3$ \\
\hline 
$\tau_i(R)$, $i=1,2$ & i & 0 & 0 & 0 \\ \hline
$\tau_i(A_{1}), i=1,2$ & & $\infty$ & 1 & 0  \\
 $\tau(S_1)$  &  & 1 & 2 & 0  \\
 $\tau(H_2)$  &  & 4 & 2 &1 \\ \hline
\end{tabular}
\end{center}
\caption{Numbers associated to the samples for $n=2$.}\label{Invariants2}
\end{table}

The $\mu_I$ formula for $(\C^3,0)\to (\C^4,0)$ (see \cite[Example 6.22]{Ohmoto:2014}) reads
\begin{align*}
\mu_I(F)&=-\frac{1}{\sigma_3}\big(\frac{1}{2!}(-s_0+c_1) \sigma_2+\frac{1}{3!}(s_0^2-c_1^2-c_2)\sigma_1+\\
&+\frac{1}{4!}(-s_0^3-2 s_0^2c_1+ s_0 c_1^2+16 s_0 c_2+2c_1^3-10c_1c_2) \big).
\end{align*}

Observe that, from the $b_{\alpha}$ with $\|\alpha\|\leq 3$ to be found, we already know the ones with  $\|\alpha\|\leq 2$. This leaves us with the seven unknown  $b_{\alpha}$. 

The first equations are obtained automatically from the following result, based on sampling trivial unfoldings of stable singularities of smaller dimensions.

\begin{propo}\label{EquationStable}
Let $F\colon (\C^n,0)\rightarrow (\C^{n+1},0)$ be a weighted-homogeneous stable map-germ and let $(w,d)$ be a grading of $F$. With the notations above, the coefficients $b_{\alpha}$ satisfy
\[0=\sum_{\| \alpha \|\leq n+r}b_{\alpha} c^{\alpha}\Big(\sum_{k=0}^{n+r-\|\alpha\|}\binom{k}{r}\sigma_{n+r-\|\alpha\|-k}\Big),\]
for all $r\geq 0$.
\end{propo}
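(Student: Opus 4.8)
The plan is to exploit the fact that a stable germ has $\mu_I(F)=0$, so the left-hand side of Formula (\ref{F}) vanishes. The key observation is that stability of $F$ interacts with the combinatorial structure of the weights and degrees in a controlled way: if $F$ is stable and $(w,d)$ is a grading, then I may form the $r$-parameter trivial unfolding $F\times \mathrm{id}_{\C^r}$, which is again stable and has $\mu_I=0$. The grading of this unfolding is obtained by appending $r$ new weight/degree pairs $(w_{n+j},d_{n+j})$ with $w_{n+j}=d_{n+j}$ (this is precisely the corank-preserving, $\mu_I$-preserving trivial direction). So the strategy is: apply Formula (\ref{F}) to the unfolded germ, whose source dimension is $n+r$, set the right-hand side equal to zero, and then identify how the coefficient functions $c^{\alpha}$ and the $\sigma$'s of the unfolding relate to those of $F$.

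First I would compute the effect of the trivial unfolding on the ingredients of the formula. By Proposition \ref{ChernUnfolding} (applied $r$ times, each time with $d_{n+j}=w_{n+j}$, so the correction term $(d_{n+j}-w_{n+j})$ vanishes), the Chern coefficients are unchanged: $c_{k,n+r}=c_{k,n}$, and likewise $s_0$ is unchanged since each factor $d_{n+j}/w_{n+j}=1$. Hence the monomials $c^{\alpha}$ of the unfolded germ coincide with those of $F$. The only change is in the elementary symmetric functions $\sigma$. Adding $r$ equal weight/degree variables means multiplying the generating polynomial $c(E_0)=\prod_{j=1}^{n}(1+w_j a)$ by $\prod_{j=1}^{r}(1+w_{n+j}a)$; concretely, I would show that the relevant combination of $\sigma$'s of the unfolding expands as $\sum_{k}\binom{k}{r}\sigma_{n+r-\|\alpha\|-k}$ after the appropriate normalization. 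The binomial coefficient $\binom{k}{r}$ is exactly what one expects from differentiating or from the combinatorics of choosing how many of the $r$ new trivial variables contribute; this is where the identity gets its shape.

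The main technical step — and the place I expect to do real bookkeeping — is pinning down the exact form of the unfolded $\sigma$-combination. Writing $F_r=F\times\mathrm{id}_{\C^r}$, Formula (\ref{F}) gives $0=(-1)^{n+r}\sigma_{n+r}^{-1}\sum_{\|\alpha\|\leq n+r} b_\alpha c^{\alpha}\,\sigma_{(n+r)-\|\alpha\|}^{(r)}$, where $\sigma^{(r)}$ denotes the elementary symmetric functions in the $n+r$ unfolded weights. Multiplying through by $(-1)^{n+r}\sigma_{n+r}$ (which is nonzero), the statement reduces to showing
\[
\sigma_{(n+r)-\|\alpha\|}^{(r)}=\sum_{k=0}^{n+r-\|\alpha\|}\binom{k}{r}\sigma_{n+r-\|\alpha\|-k},
\]
after specializing the $r$ extra weights appropriately. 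I expect the cleanest route is to observe that the relevant normalization forces each of the $r$ trivial variables to contribute a geometric-series factor, so that the generating function identity $\prod_{j=1}^{r}(1-w_{n+j}t)^{-1}\,c(E_0)(t)$ produces the binomial coefficients $\binom{k}{r}$ as the coefficients in $(1-t)^{-r-1}$-type expansions. The obstacle is purely combinatorial: matching the index ranges and verifying that the binomial weights emerge exactly, with no boundary terms lost. Once this generating-function identity is established, substituting back and cancelling $\sigma_{n+r}$ yields the displayed relation for every $r\geq 0$, completing the proof.
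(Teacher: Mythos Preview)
Your overall strategy is exactly the paper's: take the trivial $r$-parameter unfolding of the stable germ $F$, observe it is again stable so $\mu_I=0$, apply Proposition~\ref{thmFormulaOhmoto} in source dimension $n+r$, use Proposition~\ref{ChernUnfolding} to see that the $c^\alpha$ are unchanged, and reduce everything to an identity for the elementary symmetric functions of the unfolded weights.

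Where your proposal goes slightly off the rails is in the combinatorial step. You leave the new weights $w_{n+1},\dots,w_{n+r}$ arbitrary (subject only to $w_{n+j}=d_{n+j}$) and then try to extract the binomial coefficients via geometric-series factors $(1-w_{n+j}t)^{-1}$ and a $(1-t)^{-r-1}$ expansion. This is not right: the generating function for the $\sigma_\ell$ is the \emph{polynomial} $\prod_j(1+w_j t)$, not a power series, and for generic extra weights the unfolded $\sigma^{(r)}_\ell$ genuinely depends on the $w_{n+j}$, so no universal identity of the stated form can emerge. The missing move (and what the paper does) is to \emph{specialize} all extra weights to $1$, giving the grading $((w,1,\dots,1),(d,1,\dots,1))$. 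Then the generating function becomes $(1+t)^r\prod_{j=1}^n(1+w_j t)$, and reading off the coefficient of $t^\ell$ gives immediately
\[
\sigma_{\ell,n+r}(w,1,\dots,1)=\sum_{k=0}^{\ell}\binom{r}{k}\,\sigma_{\ell-k}(w),
\]
which is the identity you need. Once you make this specialization, the ``real bookkeeping'' you anticipated becomes a one-line generating-function computation, and no geometric series are involved.
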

\begin{proof}
Observe that any trivial $r$-parameter unfolding of $F$ is also stable and admits the grading $((w,1,\dots,1),(d,1,\dots,1))$. The result follows putting together Proposition \ref{thmFormulaOhmoto}, Proposition  \ref{ChernUnfolding} and the equality
\[\sigma_{\ell,n+r}(w,1,\dots,1)=\sum_{k=0}^{\ell}\binom{k}{r}\sigma_{\ell-k}(w).\qedhere\]
\end{proof}

Applying this property to the $(w,d)$ from $\tau_1(R)$, $\tau_1(A_1)$ and $\tau_2(A_1)$ with $r=1$, we obtain three independent equations.

Notice that, for $r=1$, the equation from $\tau_2(R)$ is not independent of the one from $\tau_1(R)$. The sample $\tau_3(A_1)$ which did not produce an independent equation for $n=2$, does give a new equation for $n=3$, that is, for $r=1$. We have used the singularities from Table \ref{TableSingularities3} to finish the interpolation. That is, Houston's and Kirk's singularities $P_1$ and $P_2$ \cite{Houston:1999}, and Sharland's  singularity $\hat B_3$ \cite{Altintas:2014}.

\begin{table}[ht]
\setlength{\extrarowheight}{2.5pt}
\begin{center}
\begin{tabular}{ c  c  c }
Label & Map-germ & $\mu_I$ \\
\hline 
 $P_1$  & $(z(z^3+y),z(z^2+x),y,x)$ & 1  \\
 $P_2$  & $(z(z^4+y),z(z^2+x),y,x)$ & 2 \\
 $\hat B_3$ & $(y^2+xz,z^2-xy,y(y^2+z^2)+z(y^2-z^2),x)$ & 33 \\ \hline
\end{tabular}
\end{center}
\caption{Singularities sampled beside $R$ and $A_1$ for $n=3$.}\label{TableSingularities3}
\end{table}

From the diagram (\ref{D}), it follows that the crucial invariants are the ones in the following table:

\begin{table}[ht]
\setlength{\extrarowheight}{2.5pt}
\begin{center}
\begin{tabular}{ c  c  c  c  c }
Sample & $\#A_1$ & $\#A_0A_1$  & $\#A_0^4$ \\
\hline 
$\tau_1(R)$ & 0 & 0 & 0 \\ \hline
$\tau_i(A_1)$, $i=1,2,3$  & 1 & 0 & 0 \\
 $\tau(P_1)$  & 3 & 2 & 0  \\
 $\tau(P_2)$  & 2 & 3 & 0 \\ \hline
 $\tau(\hat B_3)$ & 5 & 16 &1 \\ \hline
\end{tabular}
\end{center}
\caption{Numbers associated to the samples for $n=3$.}\label{Invariants3}
\end{table}

Notice that the singularities in Table \ref{TableSingularities3} have corank 1, with the exception of Sharland's singularity $\hat B_3$ of corank 2. However, the interpolation method can be completed without resort to corank 2 map-germs.  We may use the singularity 
\[F\colon (z,y,x)\mapsto (z^4-xz,(y+z)^5+xz^2,y,x).\]
The previous map-germ of corank 2 was included in order to avoid justifying that $F$ is $\mathcal A$-finite with $\mu_I(F)=52$. Criteria for $\mathcal A$-finiteness and computation of $\mu_I$ will be discussed in Section \ref{Afinite}.

\section{Ohmoto's $\mu_I$ formulas for $n=4$, $5$}\label{SecNew}
Here we sketch the steps to prove Theorem \ref{FormulaNew}. The same interpolation idea used for $n=2,3$ applies just as fine for $n=4,5$ but, as far we know, the examples found in the literature do not suffice to complete the associated system of equations. 

As it turns out, it is not always easy to produce $\mathcal A$-finite singularities giving new  independent equations. One has to bear in mind that checking $\mathcal A$-finiteness and computing $\mu_I$ are often computationally unfeasible tasks. For $\mathcal A$-finiteness, we use a geometric criteria based on multiple points. For $\mu_I$, we first compute the $\mathcal A_e$-codimension (for which commutative algebra algorithms exist), then we justify that the germ satisfies Mond's conjecture, ensuring the equality of $\mu_I$ and the computed $\mathcal A_e$-codimension.  

\subsection*{The $\boldsymbol{\mu_I}$ formula for $\boldsymbol{n=4}$} 

From Proposition \ref{EquationStable} applied to $R$ and $A_1$, for $r=2$, we find five independent equations from $\tau_1(R)$ and $\tau_i(A_1)$, for $i=1,\dots,4$.
One can check that no more samples from singularities $\#A_1=1$ can be used. 

Our next move is to consider the stable singularity
\[A_2\colon (z,y,x,t)\mapsto(z^3+tz,yz^2+xz,y,x,t)\]
with samples 
\[\tau_i(A_2)=((1,i+1,2,i),(3,i+2,i+1,2,i),0).\]
Three new equations arise from $\tau_1(A_2)$, $\tau_2(A_2)$ and $\tau_3(A_2)$.
One checks that map-germs with $\#A_2\leq 1$ and $\#A_0^2A_1=0$ do not provide new information. Also, nothing new comes from map-germs that the authors could find in the literature.
We consider the new map-germs 
\begin{itemize}
\item[] $L_1\colon (z,y,x,t)\mapsto(z^4-tz,(y+z)^6+xz,y,x,t),$
\vspace{2mm}
\item[] $L_2\colon (z,y,x,t)\mapsto(z^4+xz^2+tz,(y+z)^5+(x^2+ty)z,y,x,t),$
\end{itemize}
which have $\#A_0^5=0$ and $\mu_I(L_1)=39$ and $\mu_I(L_2)=87$. At this point, it is not possible to obtain further equations if $\# A_0^5=0$. We take another map-germ 
\[L_3\colon (z,y,x,t)\mapsto(z^5-tz,(y+z)^7+xz,y,x,t),\]
whith $\mu_I(L_3)=178$. To avoid disrupting the flow of the explanation, the $\mu_I$ values and $\mathcal A$-finiteness of $L_1$, $L_2$ and $L_3$ will be justified in Section \ref{Afinite}. The singularities $L_i$ were not our first candidates for the interpolation. In Remark \ref{NoFirstExamples} we  explain what brought us to them. 

At this stage, one can check that it is necessary to introduce map-germs of corank 2. For instance, Sharland's
\[\hat D_1 \colon (z,y,x,t)\mapsto(y^2+xz+(x^2+t)y,yz,z^2+y^3+t^2y,x,t),\]
which is known to have $\mu_I(\hat D_1)=27$ \cite{Altintas:2014}. This finishes the proof of Theorem \ref{FormulaNew} for $n=4$, except from the claimed $\mathcal A$-finiteness and image Milnor numbers of $L_1$, $L_2$ and $L_3$. 

\begin{re}
If one does not care about introducing more map-germs of corank 2, $L_1$ and $L_2$ can be interchanged by Sharland's $\hat E_1$ and $\hat K_1$ \cite{Altintas:2014}. It is however unavoidable to study the $\mathcal A$-finiteness and the $\mu_I$ of at least one new map-germ. This is because the system of equations cannot be closed without resorting to a map-germ with $\A_0^5\neq 0$, such as $L_3$.
\end{re}

\begin{table}[ht]
\setlength{\extrarowheight}{2.5pt}
\begin{center}
\begin{tabular}{ c  c  c  c  c}
Sample & $\#A_1$  & $\#A_2$ & $\#A_0^2A_1$ & $\#A_0^5$ \\
\hline 
$\tau_1(R)$  & 0 & 0 & 0 &0 \\ \hline
$\tau_i(A_1)$, $i=1,\dots,4$  & 1 & 0 & 0 & 0 \\
 $\tau_i(A_2)$, $i=1,2,3$  & $\infty$ & 1 & 0 & 0 \\
 $L_1$  & 15 & 8 & 12 & 0 \\
 $L_2$  & 12 & 12 & 12 & 0 \\
 $L_3$  & 24 & 15 & 60 & 3 \\ \hline
 $\hat D_1$  &$\infty$ & 9 & 0 & 0 \\
\hline
\end{tabular}
\end{center}
\caption{Numbers associated to the samples for $n=4$.}\label{Invariants4}
\end{table}

\subsection*{The $\boldsymbol{\mu_I}$ formula for $\boldsymbol{n=5}$} There are 19 unknown $b_{\alpha}$ to be determined. We will need six new map-germs and to stablish their $\mathcal A$-finiteness and $\mu_I$ values.

Again, Proposition \ref{EquationStable} is applied a number of times, in this case to
$\tau_1(R)$, $\tau_i(A_1)$, for $i=1,\dots, 5$, and $\tau_i(A_2)$, for $i=1,\dots,4$.
Next samples need to satisfy $\#A_2>1$ and hence they cannot be stable. By a similar argument, at least three map-germs of corank 2 will be necessary to close the formula. These will be Sharland's map-germs \cite{Altintas:2014}, $\hat M_{1,1}$, $\hat P_{1}$ and $\hat N_{1}$\footnote{There seems to be a typo in Sharland's parameterisation of $\hat 
N_1$. Our term $x^4y$ replaces her $x^2y$, inconsistent with the claim that $\hat N_1$ unfolds $\hat E_1$.} with image Milnor numbers $13$, $24$, $1400$ and coordinate functions as in Table \ref{SharlandC5C6}.

\begin{table}[ht]
\setlength{\extrarowheight}{2.5pt}
\begin{center}
\begin{tabular}{c c }
Label &{Map-germ} \\
 \hline 
$\hat M_{1,1}$&  $(y^2+xz+(x^2+s)y,yz+ty,z^2+y^3+s^2y,x,t,s)$\\
$\hat P_1$&$(y^2+(x+s)z,z^2+xy,y^3+s^2y+z^3+yz^2+tz,x,t,s)$ \\
$\hat N_1$&$(y^3+(x^4+t)y+xz,(y+s)z,z^2+y^5+s^3y^2+(t^2+s^4)y,x,t,s)$\\
 \hline
\end{tabular}
\end{center}
\caption{Sharland's singularities of corank 2.}\label{SharlandC5C6}
\end{table}

 Once $\hat M_{1,1}$, $\hat P_{1}$ and $\hat N_{1}$ are included, no other known singularity will contribute an independent equation. We produce the  new non-stable map-germs of corank 1 found in Table \ref{NewInterpolation5}, whose $\mathcal A$-finiteness and $\mu_I$ are determined case by case. Again, the singularities $\tilde L_i$ and $Q_i$ were not our first examples for the interpolation. We shall explain the details in the following section.

\begin{table}[ht]
\setlength{\extrarowheight}{2.5pt}
\begin{center}
\begin{tabular}{c c c }
Label &{Map-germ}  & $\mu_I(F)$\\
 \hline 
$\tilde L_2$&$(z^4+tz+xz^2+s^3z,(y+z)^5+x^2z+tyz+s^2z^3,y,x,t,s)$ & {321}\\
$\tilde L_1$&$(z^4-tz+s^2z^2,(y+z)^6+xz+s^3z^3,y,x,t,s)$ & {149}\\
$Q_1$&$(z^4+tz^2+tyz+s^3z,(y+z)^7-xz+s^4z^3,y,x,t,s)$ &{711} \\
$Q_2$& $(z^5-xz+tz^2+s^2z,(y+z)^5+sz^3+xz,y,x,t,s)$ & 144\\
$Q_3$&  $(z^5+(x^2+t)z-sz^2+xz^3,(y+z)^6+sxz-tz^2,y,x,t,s)$ & 654\\
$Q_4$& $(z^8-xz+syz^3,(y+z)^6+tz-sz^2,y,x,t,s)$ &  {862} \\
 \hline
\end{tabular}
\end{center}
\caption{Some new $\mathcal A$-finite singularities.}\label{NewInterpolation5}
\end{table}

This finishes the interpolation for $n=5$. Table \ref{Invariants5} contains the numbers associated to the samples.

\begin{table}[ht]
\setlength{\extrarowheight}{2.5pt}
\begin{center}
\begin{tabular}{ c  c  c  c  c  c}
Sample & $\#A_1$  & $\#A_2$ & $\#A_0A_2$ & $\#A_0^3A_1$ & $\#A_0^6$  \\
\hline 
$\tau_1(R)$  & 0 & 0 & 0 & 0 & 0\\ \hline
$\tau_i(A_1)$, $i=1,\dots,5$  & 1 & 0 & 0 & 0 & 0 \\
 $\tau_i(A_2)$, $i=1,\dots,4$  &$\infty$ & 1 & 0 & 0 & 0\\
 $\tau(\tilde L_2)$  & 12 & 12 & 0 & 0 & 0\\
 $\tau(\tilde L_1)$  & 15 & 8 & 24 & 0 & 0\\ 
 $\tau(Q_1)$  & 18 & 15 & 60 & 0& 0 \\
 $\tau(Q_2)$  & 16 & 12 & 24 & 4& 0 \\
 $\tau(Q_3)$  & 20 & 30 & 60 & 20& 0 \\
 $\tau(Q_4)$  & 35 & 24 & 90 & 120& 3 \\\hline
 $\tau(\hat M_{1,1})$  & 3 & 6 & 0 & 0& 0 \\
 $\tau(\hat P_{1})$  & 5 & 6 & 6 & 0& 0 \\
 $\tau(\hat N_{1})$ & 5 & 33 & 84 & 0& 0 \\
\hline
\end{tabular}
\end{center}
\caption{Numbers associated to the samples for $n=5$.}\label{Invariants5}
\end{table}

\section{$\mathcal A$-finiteness, stabilisations and image Milnor number}\label{Afinite}


The remaining map-germs whose $\mathcal A$-finiteness we must justify have corank 1 and can be studied in terms of their multiple point spaces, thanks to work by Marar and Mond \cite{MararMondCorank1}.

\begin{defi}
Let $F\colon (\C^n,0)\to (\C^p,0)$ be a map-germ of corank one in the normal form $F(z,y)=(f(z,y),y)$, with $y\in \C^{n-1}$ and $z\in \C$. The \emph{$k$-multiple point spaces} $D^k(F)$, are the zero locus 
 in $\C^k\times\C^{n-1}$ of  the iterated divided differences 
  \begin{itemize} 
    \item[] $f_j[z_1,z_2,y]=\dfrac{f_j(z_2,y)-f_j(z_1,y)}{z_2-z_1}$, 
    \item[] $f_j[z_1,z_2,z_3,y]=\dfrac{f_j[z_1,z_3,y]-f_j[z_1,z_2,y]}{z_3-z_2}$
    \item[]$\vdots$
    \item[]$f_j[z_1,\dots,z_k,y]=\dfrac{f_j[z_1,\dots,z_{k-2},z_k,y]-f_j[z_1,\dots,z_{k-2},z_{k-1},y]}{z_k-z _{k-1}},$
  \end{itemize} 
  for $1\leq j\leq p-n+1$.
\end{defi}

\begin{teo}\label{MM}[Marar and Mond \cite{MararMondCorank1}, Theorem 2.14]
With the above notations, 
\begin{enumerate}
\item[$i)$] $F$ is stable if and only if each $D^k(F)$ is smooth of dimension $kn-(k-1)p$. 
\item[$ii)$]$F$ is $\mathcal A$-finite if and only if each $D^k(F)$ is a complete intersection of dimension $kn-(k-1)p$ with isolated singularity, or it is contained in the origin.
\end{enumerate}
\end{teo}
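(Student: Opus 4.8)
The plan is to exploit the fact that the divided differences cut out $D^k(F)$ with exactly the expected number of equations. In the ambient space $\C^k\times\C^{n-1}$ of dimension $k+(n-1)$, the defining equations are $f_j[z_1,\dots,z_i,y]=0$ for $2\leq i\leq k$ and $1\leq j\leq p-n+1$, which is a total of $(k-1)(p-n+1)$ equations. A direct count shows
\[
(k+n-1)-\big(kn-(k-1)p\big)=(k-1)(p-n+1),
\]
so the expected dimension $kn-(k-1)p$ is precisely the ambient dimension minus the number of equations. Consequently $\dim D^k(F)\geq kn-(k-1)p$ always holds, with equality exactly when the divided differences form a regular sequence, i.e. when $D^k(F)$ is a complete intersection of the expected dimension. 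This is the structural observation that makes the complete-intersection condition in $ii)$ the natural one, and I would establish it first, together with the compatible $S_k$-action permuting the points $z_1,\dots,z_k$.

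For part $i)$ I would argue in both directions using Mather's theory. If $F$ is stable, then it is $\mathcal A$-equivalent to one of the explicit corank-one stable normal forms, for which the divided differences can be computed directly and the $D^k(F)$ are seen to be smooth of dimension $kn-(k-1)p$ (vanishing once $kn-(k-1)p<0$). Conversely, the key technical engine is an inductive reduction, in the spirit of Marar and Mond, identifying $D^{k+1}(F)$ with a multiple point space of the map obtained by restricting $F$ to $D^2(F)$ (forgetting one of the points). This recursion lets one transfer smoothness of the higher $D^k$ into infinitesimal stability of $F$: smoothness of all $D^k(F)$ of the correct dimension forces $F$ to be infinitesimally stable, hence stable by Mather.

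For part $ii)$ I would invoke the Mather--Gaffney criterion: $F$ is $\mathcal A$-finite if and only if it has isolated instability, that is, $F$ is stable on a punctured neighbourhood of the origin. Applying part $i)$ pointwise away from $0$, this is equivalent to each $D^k(F)$ being smooth of the expected dimension outside the origin. Combined with the complete-intersection condition (which rules out lower-dimensional or embedded components that would spread instability and guarantees finiteness of the relevant local algebra), one obtains exactly ``$D^k(F)$ is a complete intersection of dimension $kn-(k-1)p$ with isolated singularity.'' The alternative ``contained in the origin'' covers the multiplicities $k$ with $kn-(k-1)p<0$, where the expected dimension is negative and generic emptiness forces the germ of $D^k(F)$ into $\{0\}$.

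The main obstacle is the scheme structure along the diagonals $z_i=z_j$. The divided differences are engineered so that $D^k(F)$ is the \emph{closure} of the open set of genuine $k$-tuples of distinct preimages, carrying the correct multiplicity where points collide; the delicate point is proving that the divided-difference ideal is the correct (Cohen--Macaulay, and generically reduced) ideal, so that ``complete intersection'' is both meaningful and equivalent to the geometric nondegeneracy of the multiple-point locus. Establishing this scheme-theoretic dictionary, and the precise equivalence between Cohen--Macaulayness/smoothness of the $D^k(F)$ and infinitesimal stability, is where the real work lies; the stability and finiteness statements then follow formally from Mather's and Mather--Gaffney's theorems.
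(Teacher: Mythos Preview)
The paper does not prove this theorem: it is quoted verbatim as Theorem~2.14 of Marar--Mond \cite{MararMondCorank1} and used as a black box to check $\mathcal A$-finiteness of the new examples via a \textsc{Singular} implementation of the divided differences. There is therefore no ``paper's own proof'' to compare your proposal against.

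That said, your sketch is a reasonable outline of how the original Marar--Mond argument proceeds: the dimension count showing that the divided differences give exactly the expected number of equations, the use of Mather's stability theory for part~$i)$, and the Mather--Gaffney isolated-instability criterion for part~$ii)$ are indeed the main ingredients. One inaccuracy worth flagging: your explanation of the alternative ``contained in the origin'' is not quite right. It is not merely covering the case $kn-(k-1)p<0$; rather, even when the expected dimension is nonnegative, an $\mathcal A$-finite germ can have $D^k(F)$ supported only at the origin (so that the scheme is zero-dimensional or empty rather than a complete intersection of the expected positive dimension). The clause is there to allow this degenerate possibility at any $k$, not only past the dimension threshold.
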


This criterion has been used  for $L_1$, $L_2$, $L_3$, $\tilde L_1$, $\tilde L_2$ and $Q_1,\dots, Q_4$, by means of a \textsc{Singular} \cite{SingularSofware} implementation of the divided differences.

Our methods to compute the image Milnor number require finding stable unfoldings or stabilisations of $F$.
Stable unfoldings are easier to obtain, by means of a well known procedure due to Mather \cite{MatherIV}. We have used stable unfoldings of $L_1$, $L_2$, $L_3$, $\tilde L_1$, $\tilde L_2$ and $Q_1$.

\begin{re}
In certain cases, stable unfoldings are too complicated for the computations we need to perform. For these maps it is worth spending some time in finding a stabilisation, which are unfoldings involving only one parameter. A germ $\mathcal F=(F_y,y)$ is a \emph{stabilisation} if $F_{y_0}$ is stable for $y_0\in \C\setminus \{0\}$.
\end{re}
 We do not know a method to produce stabilizations other than just trial and error, but  a candidate can be checked to be a stabilization in the following way:

Let $J_y$ be the \emph{relative jacobian ideal} of $I^k(\mathcal F)$, i.e.  the ideal generated by the divided differences of an unfolding $\mathcal F(z,y)=(F_y(z),y)$. To be precise, $J_y$ is generated by the maximal minors of the matrix of partial derivatives, only with respect to $z$, of the generators of $I^k(\mathcal F)$. Inspection of the divided differences gives the equality
\[D^k(\mathcal F)\cap\{y=y_0\}=D^k(F_{y_0}).\]
By Theorem \ref{MM}, the germ  $F_{y_0}$ is stable for all $y_0\neq 0$  if and only if \[D^k(\mathcal F)\cap V(J_y)\subseteq \{y=0\}.\] This can be checked with the help of \textsc{Singular}, as follows.

\begin{propo}
With the previous notations, $\mathcal F$ is a stabilization of $F_0$ if and only if $y\in \sqrt{J_y+I^k(\mathcal F)}$.
\end{propo}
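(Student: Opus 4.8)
The plan is to reduce the proposition to Hilbert's Nullstellensatz, the geometric work having already been done in the paragraph preceding the statement. Recall that, via Theorem \ref{MM}, that discussion shows $\mathcal F$ to be a stabilization of $F_0$ exactly when $D^k(\mathcal F) \cap V(J_y) \subseteq \{y = 0\}$: by definition $D^k(\mathcal F) = V(I^k(\mathcal F))$, while $V(J_y)$ is the locus along which the fibers $D^k(F_{y_0})$ fail to be smooth of the expected dimension. The latter uses that $D^k(\mathcal F)$ is a complete intersection of the expected dimension, so that the Jacobian criterion applied to the derivatives with respect to the fiber variables alone---that is, to the \emph{relative} Jacobian $J_y$---cuts out precisely the fiberwise singular points. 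Thus all that remains is to turn the containment $D^k(\mathcal F) \cap V(J_y) \subseteq \{y = 0\}$ into the stated radical membership.

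First I would rewrite the left-hand side as a single analytic germ. Since $D^k(\mathcal F) = V(I^k(\mathcal F))$, we have
\[D^k(\mathcal F) \cap V(J_y) = V(I^k(\mathcal F)) \cap V(J_y) = V\big(J_y + I^k(\mathcal F)\big),\]
and $\{y = 0\} = V(y)$, so the criterion becomes $V(J_y + I^k(\mathcal F)) \subseteq V(y)$; equivalently, the coordinate $y$ vanishes on the common zero set of $J_y + I^k(\mathcal F)$.

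Then I would invoke the Nullstellensatz. Working in the local ring $\mathcal O$ of convergent power series on the source of $\mathcal F$, R\"uckert's analytic Nullstellensatz identifies the ideal of germs vanishing on $V(J_y + I^k(\mathcal F))$ with $\sqrt{J_y + I^k(\mathcal F)}$. Hence $y$ vanishes on $V(J_y + I^k(\mathcal F))$ if and only if $y \in \sqrt{J_y + I^k(\mathcal F)}$, and chaining this with the criterion recalled above yields the equivalence. For the actual verification in \textsc{Singular} one passes to a polynomial representative and uses the classical Nullstellensatz, the membership $y \in \sqrt{J_y + I^k(\mathcal F)}$ then being decidable by a standard radical-membership computation.

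The one point demanding care, and the place where the argument is more than a formality, is the quantification over $k$. Stability of $F_{y_0}$, by Theorem \ref{MM}(i), is the smoothness of \emph{every} $D^k(F_{y_0})$ of the appropriate dimension, so the biconditional must be understood as holding for each relevant multiple-point order $k$ (of which only finitely many give nonempty $D^k$ once the multiplicity is bounded); $\mathcal F$ is a stabilization precisely when $y \in \sqrt{J_y + I^k(\mathcal F)}$ simultaneously for all these $k$. Verifying that $V(J_y)$ really captures the non-smooth fibers---that is, that the complete-intersection and expected-dimension hypotheses of Theorem \ref{MM} persist for the unfolding $\mathcal F$---is the remaining ingredient that must be in place before the Jacobian criterion can be applied.
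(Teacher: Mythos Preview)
Your proposal is correct and is exactly the argument the paper has in mind: the proposition is stated in the paper without proof, immediately after the sentence ``This can be checked with the help of \textsc{Singular}, as follows'', because the authors regard it as the obvious Nullstellensatz reformulation of the geometric criterion $D^k(\mathcal F)\cap V(J_y)\subseteq\{y=0\}$ established in the preceding paragraph. Your observation that the condition must hold for all relevant $k$, and that one is implicitly using the complete-intersection property of $D^k(\mathcal F)$ so that the relative Jacobian locus genuinely detects fiberwise singularities, are points the paper leaves tacit but which are indeed needed for the statement to be fully justified.
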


This method has been used to find stabilizations $(\C^6,0)\to(\C^7,0)$ of $Q_2$, $Q_3$ and $Q_4$ mapping $(z,y,x,t,s,u)$, respectively, to
\[\big(z^5+u^2z^3+tz^2+(s^2-x)z,(y+z)^5+sz^3+(u^4+x)z,y,x,t,s,u\big),\]
\[\big(z^5+(u^2+x)z^3-sz^2+(x^2+t)z,(y+z)^6-tz^2+(u^5+sx)z,y,x,t,s,u\big),\]
\[\big(z^8+syz^3+u^6z^2-xz,(y+z)^6+u^2z^4+sz^2+tz,y,x,t,s,u\big).\]

This covers the required techniques to check $\mathcal A$-finiteness and find stabilization and stable unfoldings. Because of its topological nature, computing $\mu_I$ directly is a hard task; we do it via Mond's conjecture \cite{Mond:1991}.

\begin{conj}\label{MondConjecture}Let $F\colon (\C^n,0)\to (\C^{n+1},0)$ be an $\mathcal A$-finite map-germ in Mather's nice dimensions \cite{MatherVI}. Then
\[\mu_I(F)\geq \mathcal A_e\text{-codim}(F),\]
with equality in the weighted-homogeneous case. 
\end{conj}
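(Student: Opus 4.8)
The plan is to treat the two assertions separately, since they are of genuinely different character: the inequality $\mu_I(F)\geq \mathcal{A}_e\text{-codim}(F)$ is expected for all $\mathcal{A}$-finite germs in the nice dimensions, whereas the equality in the weighted-homogeneous case is a rigidity phenomenon, the analogue of the classical identity $\mu=\tau$ for quasi-homogeneous isolated hypersurface and complete intersection singularities. For the inequality I would realise both invariants inside a miniversal unfolding $\mathcal{F}$ of $F$: the $\mathcal{A}_e$-codimension is the minimal number of unfolding parameters, i.e. $\dim_{\mathbb C}$ of the $\mathcal{A}_e$-normal space of $F$, while $\mu_I(F)$ is the rank of the middle vanishing homology of the disentanglement obtained over a generic point of the parameter space. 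The goal is then to attach to each independent infinitesimal deformation an independent vanishing cycle of the disentanglement, so that the parameter count cannot exceed the cycle count.

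The main tool for this comparison is the image-computing spectral sequence of Goryunov and Mond, which recovers the reduced cohomology of the image of a stable perturbation from the alternating cohomology of the multiple point spaces $D^k(F)$ appearing in Theorem \ref{MM}. I would use it to translate the homotopy type of the disentanglement, a wedge of $n$-spheres by \cite{Mond:1991}, into algebraic data attached to the $D^k$, and in parallel express $\mathcal{A}_e\text{-codim}(F)$ through the same multiple-point geometry. Matching the two descriptions term by term reduces the inequality to a bound between dimensions of certain cohomology modules of the (possibly singular) spaces $D^k$.

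For the equality in the weighted-homogeneous case, the plan is to exploit the $\mathbb{C}^*$-action coming from the grading $(w,d)$. This induces a weight filtration on all the modules and cohomology groups above, and the strategy is to show that the associated spectral sequences degenerate, forcing the algebraic count $\mathcal{A}_e\text{-codim}(F)$ to coincide with the topological count $\mu_I(F)$, exactly as in the degeneration underlying $\mu=\tau$ for quasi-homogeneous singularities. Concretely, one may first invoke the reduction recalled in the Introduction from \cite[Section 7]{Fernandez-de-Bobadilla:2016}, which reduces the conjecture to a single family of finitely determined germs of unbounded multiplicity; for such a family $\mathcal{A}_e\text{-codim}$ is given by a known closed formula, and one would identify $\mu_I$ with it by a conservation-of-number argument along the family, the weighted-homogeneous structure guaranteeing that no vanishing cycles escape to infinity.

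The step I expect to be the main obstacle, and the reason the statement remains a conjecture for $n\geq 3$, is controlling the alternating cohomology of the multiple point spaces $D^k(F)$ once these cease to be smooth. For $n\leq 2$ the $D^k$ are under enough control that both the degeneration argument and the cycle-counting close, but in general their singularities obstruct both. Absent a proof that the relevant modules are Cohen--Macaulay, or that the image-computing spectral sequence degenerates at $E_1$ in the graded setting, the realistic fallback, and the one adopted in this paper, is to certify the conjecture \emph{case by case} for each sampled germ: computing $\mathcal{A}_e\text{-codim}$ by commutative algebra and establishing equality with $\mu_I$ via the already-known cases together with the explicit stabilisations constructed above.
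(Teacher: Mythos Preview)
The statement you were asked to prove is labelled \emph{Conjecture} in the paper, and the paper does not prove it. It is Mond's conjecture, open for $n\geq 3$; the paper merely recalls it and then, for each individually sampled germ, verifies the equality by computing $\mathcal A_e$-codim and checking the Cohen--Macaulay criterion of \cite[Theorem 6.1]{Fernandez-de-Bobadilla:2016} on a stable unfolding or stabilisation. There is therefore no ``paper's own proof'' to compare against.

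Your proposal is not a proof either, and you essentially concede this in the final paragraph: the crucial step---controlling the alternating cohomology of the $D^k$ when they are singular, or equivalently the $E_1$-degeneration of the image-computing spectral sequence in the graded setting---is precisely the missing ingredient that keeps the conjecture open. The earlier paragraphs are a reasonable heuristic outline (the $\mu=\tau$ analogy, the role of the Goryunov--Mond spectral sequence, the reduction in \cite{Fernandez-de-Bobadilla:2016}), but none of the announced identifications (``attach to each independent infinitesimal deformation an independent vanishing cycle'', ``matching the two descriptions term by term'', ``the spectral sequences degenerate'') is actually carried out, and no argument is given for why weighted-homogeneity alone should force degeneration. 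In short: what you wrote is a survey of the expected strategy and its known obstruction, not a proof, and the paper makes no stronger claim than that.
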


\begin{re} Mond's conjecture is known to be true in some cases. As explained in Section \ref{secOhmotoFormula}, it holds for $n\leq 2$, but also for fold map-germs, by work of Houston \cite{Houston:1998}, and for singularities of corank 1  with $\mathcal{A}_e$-codimension 1, by  work of Cooper, Mond and Atique \cite{Cooper:2002}.
\end{re}

Our strategy to compute $\mu_I$ for a weighted-homogeneous germ is based on results from \cite{Fernandez-de-Bobadilla:2016} and consists on computing $\mathcal{A}_e\text{-codim}(F)$ first  and then justifying that Mond's conjecture holds for $F$. 
The $\mathcal A_e$-codimension of weighted-homogeneous $\mathcal A$-finite map-germs can be computed with \textsc{Singular}, as follows: let $F\colon (\C^n,0)\to (\C^{n+1},0)$ be an $\mathcal A$-finite map-germ. Let $g\in \mathcal O_{n+1}$ be a function-germ such that $g=0$ is a reduced equation for the image of $F$, and let $J(g)$ be the jacobian ideal of $g$. Then 
\[\mathcal A_e\text{-codim}(F)=\dim_{\mathbb C}\frac{(f^*)^{-1}(J(g)\cdot \mathcal O_n)}{J(g)}.\]

To check that Mond's conjecture holds for $F$, let $\mathcal F\colon(\C^n\times \C^r,0)\to (\C^{n+1}\times \C^r,0)$ be either a stable unfolding or a stabilisation of $F$. Let $G$ be an equation of the image of $F$ and $\mathcal G$ be an equation of the image of $\mathcal F$ which specialises to $G$. Let \[M_y(\mathcal G)=\frac{J(\mathcal G)}{J_y(\mathcal G)},\]
where $J(\mathcal G)$ is the jacobian ideal and $J_y(\mathcal G)$ the relative jacobian ideal of $\mathcal G$. 

\begin{teo}[\cite{Fernandez-de-Bobadilla:2016}, Theorem 6.1]
Let $F$ and $M_y(\mathcal G)$ be as above. If $M_y(\mathcal G)$ is a Cohen-Macaulay module, then $F$ satisfies Mond's conjecture.
\end{teo}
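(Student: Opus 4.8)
The plan is to obtain Mond's conjecture for $F$ by collapsing the a priori inequality $\mathcal A_e\text{-codim}(F)\leq\mu_I(F)$ into an equality, the collapse being forced by the Cohen--Macaulay hypothesis. Both invariants will be read off from the single relative module $M_y(\mathcal G)=J(\mathcal G)/J_y(\mathcal G)$, viewed as a finite module over the parameter space of the stable map $\mathcal F$, so that the whole argument reduces to a freeness statement over a regular base.

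First I would set up $M_y(\mathcal G)$ as a coherent sheaf on the total space $\Im(\mathcal F)$, finite over the base $\C^r$ through the projection $\pi$ onto the unfolding parameters. The relative jacobian ideal $J_y(\mathcal G)$ cuts out the locus where the fibres $X_y=\pi^{-1}(y)$ are singular, and $M_y(\mathcal G)$ measures the discrepancy between the absolute and relative critical loci; since $F$ is $\mathcal A$-finite, this module is supported over the origin of the fibres near $0$, so its pushforward to the base is genuinely finite. The two identifications to establish are: (i) the \emph{generic rank} $\operatorname{rank}_{\mathcal O_{\C^r}}M_y(\mathcal G)$ equals $\mathcal A_e\text{-codim}(F)$, matching the image-Jacobian-module description $\mathcal A_e\text{-codim}(F)=\dim_\C M(G)$ recalled above; and (ii) the \emph{central fibre} $\dim_\C\big(M_y(\mathcal G)\otimes_{\mathcal O_{\C^r}}\C_0\big)$ equals $\mu_I(F)$, the rank of the vanishing cohomology of the disentanglement $X_y$, which in this Milnor-type picture sits concentrated at the most degenerate fibre $X_0=\Im(F)$.

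Granting (i) and (ii), the inequality is pure commutative algebra: for any finite module $N$ over a local ring the minimal number of generators $\dim_\C(N/\mathfrak m N)$ dominates the generic rank (equivalently, fibre dimension is upper semicontinuous), so $\mu_I(F)\geq\mathcal A_e\text{-codim}(F)$, which is Mond's inequality. The Cohen--Macaulay hypothesis is exactly what removes the slack. Since the base $\C^r$ is regular and $M_y(\mathcal G)$ is finite over it, being Cohen--Macaulay forces it to be maximal Cohen--Macaulay over the base, hence free by Auslander--Buchsbaum; freeness makes the minimal number of generators equal the rank, so the central fibre and the generic rank coincide and $\mu_I(F)=\mathcal A_e\text{-codim}(F)$. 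Equivalently, freeness is flatness of $\pi_*M_y(\mathcal G)$, and flatness conserves the fibre length across the degeneration $X_y\rightsquigarrow X_0$, so that no vanishing cycle is lost in the specialisation.

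The main obstacle is identification (ii): relating the purely algebraic length of $M_y(\mathcal G)$ to the topological count of spheres in the disentanglement. Because the image of $F$ is a \emph{non-isolated} hypersurface singularity, the classical Milnor-algebra formula does not apply verbatim, and the argument must replace it by a L\^e--Greuel type computation for the relative polar structure, controlled through an image-computing spectral sequence together with the finiteness of $M_y(\mathcal G)$ over the base. This is the technical heart of \cite{Fernandez-de-Bobadilla:2016}, and it is precisely the point at which the freeness granted by Cohen--Macaulayness guarantees that the central and generic counts agree.
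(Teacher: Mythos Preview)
The present paper gives no proof of this statement; it is quoted verbatim from \cite{Fernandez-de-Bobadilla:2016} and used only as a black box. So there is nothing in this paper to compare your argument against.

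That said, your architectural skeleton---$M_y(\mathcal G)$ finite over the regular base $(\C^r,0)$, Cohen--Macaulay $\Rightarrow$ free $\Rightarrow$ generic rank equals length of the special fibre---is precisely the mechanism in the cited reference. What goes wrong is the labelling of the two ends. In \cite{Fernandez-de-Bobadilla:2016} the identifications run the other way: the Samuel multiplicity (equivalently the generic rank over the base) is shown to equal $\mu_I(F)$, by a conservation-of-number argument that tracks the vanishing homology through the disentanglement family; it is the \emph{special} fibre $M(g)=M_y(\mathcal G)\otimes_{\mathcal O_{\C^r}}\C_0$ whose length is tied to $\mathcal A_e\text{-codim}(F)$ (this is the module underlying the formula for $\mathcal A_e\text{-codim}$ recalled in the paper). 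This matches the obvious geography: $F$ and its deformation theory live over $y=0$, while $\mu_I$ is read off the stable perturbation at generic $y$.

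Two concrete symptoms of the swap in your write-up: in (i) you justify ``generic rank $=\mathcal A_e\text{-codim}$'' by invoking ``$\mathcal A_e\text{-codim}=\dim M(G)$'', but $M(G)$ is by definition the special fibre, so you are already assuming the equality you want the Cohen--Macaulay hypothesis to deliver; and your opening sentence calls $\mathcal A_e\text{-codim}(F)\le\mu_I(F)$ an ``a priori inequality'', when in fact that inequality \emph{is} Mond's conjecture and is not known in advance. With the correct orientation, semicontinuity gives $\dim M(g)\ge\mu_I(F)$, a separate algebraic argument gives $\dim M(g)\ge\mathcal A_e\text{-codim}(F)$ (with equality in the weighted-homogeneous case), and Cohen--Macaulayness collapses the first inequality, yielding Mond's conjecture for $F$.
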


We have used this criterion on stable unfoldings of our new samples, with the exception of $Q_2$, $Q_3$ and $Q_4$ where computations became unfeasible. Mond's conjecture for these three examples was checked by means of the stabilisations above, instead of stable unfoldings.

\begin{re}\label{NoFirstExamples}
As pointed out before, the new singularities used for interpolation in the cases $n=4$ and $5$ were not our first candidates. Observe that not all choices of $(w,d)$ have $\mathcal A$-finite map-germs associated to them (for example, all $(w,d)$ for which the $\mu_I$ formula predicts a non-integer value). The first $\mathcal{A}$-finite germs we found had extremely high $\mathcal A_e$-codimension, making  impracticable to check Mond's conjecture for them. Our strategy was 1) assume that Mond's conjecture holds for these maps. 2) Use their conjectured values of $\mu_I$ to obtain a candidate $\mu_I$ formula. 3) Use a computer to find weights and degrees $(w,d)$ with small (conjectured) $\mu_I$ values and such that they determine enough linearly independent equations. 4) Try to find $\mathcal A$-finite candidates for these $(w,d)$ and check that they satisfy Mond's conjecture. 5) Reproof the $\mu_I$ formula by sampling these new examples.
\end{re}

\bibliography{MyBibliography} 
\bibliographystyle{plain}

\end{document}